\newcommand{\bC}{{\mathbb C}}
\newcommand{\bF}{{\mathbb F}}
\newcommand{\bP}{{\mathbb P}}
\newcommand{\bQ}{{\mathbb Q}}
\newcommand{\bZ}{{\mathbb Z}}
\newcommand{\obQ}{\overline{\mathbb{Q}}}
\newcommand{\cA}{{\mathcal A}}
\newcommand{\cF}{{\mathcal F}}
\newcommand{\cO}{{\mathcal O}}
\newcommand{\fA}{{\mathfrak A}}
\newcommand{\fB}{{\mathfrak B}}
\newcommand{\fE}{{\mathfrak E}}
\newcommand{\fT}{{\mathfrak T}}
\newcommand{\fU}{{\mathfrak U}}
\newcommand{\fX}{{\mathfrak X}}
\newcommand{\fY}{{\mathfrak Y}}
\newcommand{\fZ}{{\mathfrak Z}}
\newcommand{\fm}{{\mathfrak m}}
\newcommand{\fp}{{\mathfrak p}}
\newcommand{\et}{\text{et}}
\newcommand{\tF}{\widetilde{F}}
\newcommand{\nc}{\newcommand}
\nc\wh{\widehat}
\nc\on{\operatorname}
\nc\Gr{\on{Gr}}
\nc\Fl{\on{Fl}}
\newtheorem{thm}[subsection]{Theorem}
\DeclareMathOperator{\gr}{{gr}}
 \DeclareMathOperator{\Spf}{{Spf}}
\DeclareMathOperator{\Pic}{{Pic}}
\DeclareMathOperator{\rk}{{rk}}
\DeclareMathOperator{\ord}{{ord}}
\newcommand{\limto}{{\displaystyle\lim_{\longrightarrow}}}
\newcommand{\rightlim}{\mathop{\limto}}
\newcommand{\leftlim}{\mathop{\displaystyle\lim_{\longleftarrow}}}
\newcommand{\limfromn}{\leftlim\limits_{\raise3pt\hbox{$n$}}}
\newcommand{\limton}{\rightlim\limits_{\raise3pt\hbox{$n$}}}
\newcommand{\rightlimit}[1]{\mathop{\lim\limits_{\longrightarrow}}\limits%
                    _{\raise3pt\hbox{$\scriptstyle #1$}}}
\newcommand{\leftlimit}[1]{\mathop{\lim\limits_{\longleftarrow}}\limits%
                    _{\raise3pt\hbox{$\scriptstyle #1$}}}
\DeclareMathOperator{\Aut}{{Aut}}
\DeclareMathOperator{\im}{{Im}}
\DeclareMathOperator{\Res}{{Res}}
\newcommand{\ip}{\frac{1}{p}}
\newcommand{\Rmnum}[1]{\expandafter\@slowromancap\romannumeral #1@}
\newtheorem*{theo}{Theorem}
\newtheorem{pr}{Proposition}[section]
\newtheorem{lm}[pr]{Lemma}
\newtheorem{cor}[pr]{Corollary}
\theoremstyle{definition}
\newtheorem{df}[pr]{Definition}
\newtheorem{rem}[pr]{Remark}
\newtheorem*{que}{Question}
\newtheorem{quest}{Question}
\numberwithin{equation}{section}
\newcommand{\Fr}{\operatorname{Fr}}
\newcommand{\dR}{\mathrm{dR}}
\newcommand{\cris}{\mathrm{cris}}
\begin{document}

\title[Rigid-analytic varieties with projective reduction violating Hodge symmetry]
{Rigid-analytic varieties with projective reduction violating Hodge symmetry}

\author[]{Alexander Petrov}
\address{Harvard University, USA}
\email{apetrov@math.harvard.edu}

\begin{abstract} 
We construct examples of smooth proper rigid-analytic varieties admitting formal model with projective special fiber and violating Hodge symmetry for cohomology in degrees $\geq 3$. This answers negatively the question raised by Hansen and Li in \cite{hl}.
 \end{abstract}

\maketitle

\section{Introduction}

A powerful tool in algebraic geometry over the field of complex numbers is viewing smooth projective varieties as complex manifolds and henceforth using complex analysis to study them. An important role for that technique is played by K\"ahler manifolds: this is a subclass of complex manifolds large enough to contain all smooth projective algebraic varieties and small enough to be amenable to Hodge theory. In particular, for any compact K\"ahler manifold $X$ the Hodge-to-de Rham spectral sequence $E_1^{i,j}=H^j(X,\Omega^i_X)\Rightarrow H^{i+j}_{\dR}(X/\bC)$ degenerates at the first page and the Hodge numbers satisfy Hodge symmetry: $\dim_{\bC} H^j(X,\Omega^i_X)=\dim_{\bC}H^i(X,\Omega^j_X)$.

The situation in $p$-adic geometry is somewhat different: $p$-adic Hodge theory has been developed for all smooth proper rigid-analytic varieties, without any ``K\"ahler" assumption. It was proven by Scholze \cite{schrig} that for any such variety the Hodge-to-de Rham spectral sequence degenerates. There are, however, examples of smooth proper rigid-analytic varieties that fail Hodge symmetry. This might suggest that there should be a natural narrower class of rigid-analytic varieties for which Hodge symmetry does hold.

 Let $K$ be a discretely valued $p$-adic field with ring of integers $\cO_K$ and perfect residue field $k$. In \cite{hl} David Hansen and Shizhang Li raised the following question (see also Conjecture 2.4 in \cite{schrio}) suggesting a candidate for that narrower class:

\begin{que}
Let $X$ be a smooth proper rigid-analytic variety over $K$ admitting a formal model over $\cO_K$ with {\it projective} special fiber. Is it true that $\dim_K H^i(X,\Omega^j_{X/K})=\dim_K H^j(X,\Omega^i_{X/K})$ for all $i, j$?
\end{que}

Hansen and Li answered this question positively for $i+j=1$ in Theorem 1.2 of \cite{hl}. The main result of this text is that the answer is in general negative for $i+j\geq 3$:

\begin{theo}[Corollary \ref{maincor}]\label{nonsym}
For every pair of positive integers $i\neq j$ with $i+j\geq 3$ there exists a smooth proper rigid-analytic variety $X$ over $\bQ_p$ admitting a smooth formal model $\fX$ over $\Spf \bZ_p$ with projective special fiber $\fX_{\bF_p}$ such that $$\dim_{\bQ_p} H^i(X,\Omega^j_{X/\bQ_p})\neq \dim_{\bQ_p} H^j(X,\Omega^i_{X/\bQ_p}).$$
\end{theo}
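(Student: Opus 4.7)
My plan is to realize the counterexamples as generic fibers of smooth formal lifts of suitably Hodge-asymmetric smooth projective varieties over $\bF_p$. The central idea is that a smooth projective $Y_0/\bF_p$ with asymmetric Hodge numbers can, in characteristic zero, only lift as a non-algebraizable formal scheme: if it lifted to a projective $\bZ_p$-scheme, then by GAGA and Hodge theory over $\bC$ (via any embedding $\overline{\bQ}_p \hookrightarrow \bC$) the generic fiber would satisfy Hodge symmetry, contradicting the asymmetry on $Y_0$. Thus the plan naturally marries Grothendieck's algebraization theorem (the ample class on $Y_0$ cannot lift to such a $\fY$) with the pathology of positive-characteristic Hodge theory.

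The first step is to produce, for each $(i,j)$ with $i \neq j$ and $i+j \geq 3$, a smooth projective $Y_0/\bF_p$ with asymmetric Hodge numbers $h^j(Y_0, \Omega^i) \neq h^i(Y_0, \Omega^j)$ together with a smooth formal lift $\fY$ over $\Spf \bZ_p$. Candidates include varieties built from classical pathological examples (Enriques surfaces in characteristic $2$, quotients by non-reduced group schemes, Raynaud-type constructions), modified via products with projective spaces and applications of the K\"unneth formula to shift the asymmetry into the desired bidegree. One should take $Y_0$ with unobstructed (or explicitly computable) deformation theory; the hard requirement is that the obstruction to lifting the variety itself vanishes, while the obstruction to lifting the polarization does not.

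The second step is to transfer the asymmetry from $Y_0$ to the rigid-analytic generic fiber $X := \fY_{\bQ_p}$. By Scholze's theorem the Hodge-to-de Rham spectral sequence of $X$ degenerates, so $\sum_i h^{i,n-i}(X)$ equals $\dim_{\bQ_p} H^n_{\dR}(X/\bQ_p)$, which by Berthelot--Ogus equals the $\bQ_p$-rank of $H^n_{\cris}(Y_0/\bZ_p)$. On the special-fiber side, Hodge-to-de Rham degenerates for $Y_0$ by Deligne--Illusie, since $Y_0$ lifts at least to $W_2$, giving $\sum_i h^{i,n-i}(Y_0) = \dim_{\bF_p} H^n_{\dR}(Y_0/\bF_p)$. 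Provided the crystalline cohomology of $Y_0$ is torsion-free in degrees $n$ and $n+1$, a derived base-change argument forces these two dimension sums to agree; combined with upper-semicontinuity $h^{i,n-i}(X) \leq h^{i,n-i}(Y_0)$ coming from the coherent formalism, equality must then hold in each bidegree, and the asymmetry of $Y_0$ persists on $X$.

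The principal obstacle will be arranging all three conditions simultaneously: Hodge asymmetry, smooth liftability as a formal scheme over $\bZ_p$, and torsion-freeness of crystalline cohomology in the relevant degrees. The classical characteristic-$p$ pathological examples tend to carry precisely the crystalline torsion that ``corrects'' the Hodge asymmetry on passing to the generic fiber, so one must either identify examples avoiding such cancellation or do the cohomological bookkeeping carefully enough to show that the asymmetry does not get wiped out. I expect the proof to exhibit one or two carefully chosen ``atomic'' examples---likely for the smallest cases $i+j = 3$---and then cover the full range of bidegrees via K\"unneth-type products with projective spaces and similarly well-controlled auxiliary varieties.
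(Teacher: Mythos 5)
Your step 2 (transferring asymmetry from the special fiber to the generic fiber via degeneration of Hodge--de Rham on both sides, torsion-freeness of crystalline cohomology, and semicontinuity) is sound, and it is consistent with how the paper ultimately controls the generic fiber: in the paper's examples the Hodge cohomology of $\fX$ is free over $\bZ_p$, so the Hodge numbers of the special and generic fibers agree. The genuine gap is in step 1, which is where the entire difficulty of the theorem lives. The classical pathological examples you propose as atoms (Enriques/Igusa/Serre-type surfaces, quotients by non-reduced group schemes) all have their Hodge asymmetry in degrees $1$ and $2$, and your own step-2 hypotheses rule them out: if $Y_0$ has torsion-free crystalline cohomology and degenerate Hodge--de Rham, then $\sum_i h^{i,n-i}(Y_0)=\dim H^n_{\dR}(X/\bQ_p)$ and the generic fiber satisfies Hodge symmetry in degrees $1$ and $2$ (Hansen--Li, or the slope-symmetry/weak-admissibility argument of Proposition \ref{deg2}), forcing $Y_0$ itself to be symmetric there. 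Taking products with projective spaces to shift a degree-$\leq 2$ asymmetry into degree $3$ does not help: K\"unneth propagates the crystalline torsion or non-degeneration of the atom along with its asymmetry, so the product either still violates your hypotheses or has nothing asymmetric left to shift. You therefore need a projective variety over $\bF_p$ whose Hodge asymmetry first appears in degree $\geq 3$, with good crystalline behavior \emph{and} a smooth formal lift, and no such example is exhibited or known to come from the classical literature.

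Producing that atom is precisely what the paper does, by a mechanism absent from your proposal: an abelian variety $A$ over a finite field with CM by $\bZ[\mu_l]$ whose CM type cannot occur in characteristic zero, so that the $G=\bZ/l$-representations $H^0(A,\Omega^1)$ and $H^1(A,\cO)$ are Frobenius twists of each other rather than duals; a canonical $G$-equivariant formal lift $\fA$ (via Messing's classification of lifts by Hodge filtrations on $H^1_{\cris}$, which also solves your ``unobstructed deformation'' worry without any deformation theory of the quotient); a combinatorial argument (Lemma \ref{choosetypical}) producing an auxiliary algebraic CM abelian scheme $\fB$ so that the $G$-invariants of $\Lambda^3$ of $H^0(\fZ,\Omega^1)$ and of $H^1(\fZ,\cO)$ for $\fZ=\fA\times\fB$ have different ranks; and finally a free-action quotient $(\fZ\times\fY)/G$ with projective special fiber whose Hodge cohomology is the module of $G$-invariants. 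Note also that your opening heuristic (``if it lifted to a projective scheme it would be K\"ahler, hence symmetric'') explains only why the model must be non-algebraizable; it does not produce the example. Without a concrete construction replacing the paper's CM-plus-quotient mechanism, the proposal does not prove the theorem.
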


The idea behind the examples comes from the following difference between complex Hodge theory and $p$-adic Hodge theory. For any compact K\"ahler manifold $X$ not only there is an equality of numbers $h^{i,j}(X)=h^{j,i}(X)$ but there is a {\it canonical} isomorphism of $\bC$-vector spaces: $$H^{j}(X,\Omega^{i}_X)\simeq \overline{H^{i}(X, \Omega^{j}_X)}$$ In particular, for a finite group $G$ acting on $X$ the $G$-representations $H^{j}(X,\Omega^{i}_X)$ and $H^{i}(X,\Omega^{j}_X)$  are dual to each other.

We show that the analogous statement in $p$-adic geometry fails already for abeloid varieties. We construct a formal abelian scheme $\fA$ (necessarily a non-algebraizable one) with an action of a finite group $G$ of order prime to $p$ such that the $G$-representations $H^0(A,\Omega^1_{A})$ and $H^1(A,\cO)$ on the cohomology of the abeloid generic fiber $A$ of $\fA$ are not dual to each other.

 Moreover, for $i+j= 3$ we can arrange (Proposition \ref{formabineq}) that the dimensions of $G$-invariants on the spaces $H^i(A,\Omega_A^j)$ and $H^j(A,\Omega_A^i)$ are different (note that these $G$-representations are obtained from $H^0(A,\Omega^1_{A})$ and $H^1(A,\cO)$ by taking exterior powers and tensor products). Taking the direct product with an auxiliary smooth projective formal scheme to make the $G$-action free and taking the quotient by $G$ gives the desired example. The examples for $i+j>3$ are obtained simply by taking the direct product with an appropriate projective scheme to move the asymmetry to higher cohomology via the K\"unneth formula.

This strategy does not work for $i+j= 2$ for a good reason: any smooth proper rigid-analytic variety that admits a {\it smooth} (not necessarily with a projective special fiber) formal model satisfies Hodge symmetry on first and second cohomology:

\begin{pr}[Corollary \ref{cordeg2}]\label{introdeg2}
If a smooth proper rigid-analytic variety $X$ over $K$ admits a smooth formal model over $\cO_K$ then $\dim H^0(X,\Omega^i_{X/K})=\dim_K H^i(X,\cO)$ for $i=1,2$.
\end{pr}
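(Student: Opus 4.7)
The plan is to reduce the claim from the rigid generic fiber $X$ to the smooth proper $k$-scheme $\fX_k$, then prove Hodge symmetry in degrees $1$ and $2$ on $\fX_k$ by exhibiting smooth formal group lifts.

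Since $\fX$ is smooth proper over $\cO_K$ and Scholze's Hodge-to-de Rham degeneration applies to $X$, the coherent cohomology groups $H^j(\fX,\Omega^i_{\fX/\cO_K})$ are finite free $\cO_K$-modules whose formation commutes with base change. Hence
\[
\dim_K H^j(X,\Omega^i_{X/K})=\dim_k H^j(\fX_k,\Omega^i_{\fX_k/k}),
\]
so it suffices to prove $h^{0,i}(\fX_k)=h^{i,0}(\fX_k)$ for $i=1,2$.

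For $i=1$, I would argue as follows. The formal completion $\widehat{\Pic}(\fX/\cO_K)$ of $\underline{\Pic}(\fX/\cO_K)$ at the identity pro-represents the deformation functor of line bundles, and its Lie algebra equals $H^1(\fX,\cO_\fX)$, a finite free $\cO_K$-module of rank $h^{0,1}$. Hence $\widehat{\Pic}(\fX/\cO_K)\simeq\Spf\cO_K[[t_1,\dots,t_{h^{0,1}}]]$ is a smooth formal group. Reducing modulo the maximal ideal gives the formal completion of $\Pic^0(\fX_k)$ at the identity, which is therefore smooth; but an algebraic group over a field is smooth iff its completion at the origin is. So $\Pic^0(\fX_k)$ is a reduced commutative algebraic group of dimension $h^{0,1}(\fX_k)$, which equals $\dim \mathrm{Alb}(\fX_k)=h^{1,0}(\fX_k)$ by Picard--Albanese duality on the special fiber.

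For $i=2$, the analogous strategy is to apply the same reasoning with the Artin--Mazur functor $\Phi^2$ (the formal Brauer group) in place of $\Pic$, whose Lie algebra is $H^2(\fX,\cO_\fX)$. If $\Phi^2$ is pro-representable and its lift to $\cO_K$ is smooth, then its special fiber is smooth of dimension $h^{0,2}(\fX_k)$; matching this with $h^{2,0}(\fX_k)$ requires an additional duality with the deformation space of top $2$-forms. An alternative route is to use that $H^2_{\cris}(\fX_k/W)$ is an $F$-crystal with slopes in $[0,2]$ and to extract a Cartier-duality between its slope-$0$ and slope-$2$ isoclinic components---these have dimensions $h^{0,2}(\fX_k)$ and $h^{2,0}(\fX_k)$ respectively by Mazur's inequality becoming an equality at the extremal endpoints of the Hodge polygon.

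The main obstacle is the $i=2$ case. Unlike the Picard scheme, the formal Brauer functor is not always pro-representable (the obstruction is controlled by $H^1(\cO)$), and even when it is, matching its dimension with $h^{2,0}$ requires a duality substantially more subtle than the geometric Picard--Albanese duality used for $i=1$. The argument likely proceeds through a more sophisticated $p$-adic Hodge-theoretic mechanism---for instance a slope-decomposition argument on crystalline cohomology, or a prismatic/BMS-based duality---that exploits the smoothness of the formal model to produce the required Cartier-type pairing intrinsically, without recourse to any ampleness or Hard Lefschetz input.
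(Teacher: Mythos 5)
There is a genuine gap, and it starts with the very first reduction. From degeneration of Hodge--de Rham on the generic fiber you cannot conclude that the $\cO_K$-modules $H^j(\fX,\Omega^i_{\fX/\cO_K})$ are torsion-free; semicontinuity only gives $h^{i,j}(\fX_k)\geq h^{i,j}(X)$, and the inequality can be strict. More importantly, the strategy ``reduce to the special fiber and prove Hodge symmetry there'' cannot work in principle: Hodge symmetry in degrees $1$ and $2$ genuinely fails for smooth proper varieties over $k$ (non-reduced Picard schemes force $h^{0,1}>\tfrac12 b_1$, and there are surfaces with $h^{1,0}\neq h^{0,1}$), so you would first have to show that no such variety admits a smooth formal lift with torsion-free Hodge cohomology --- which is essentially the statement you are trying to prove. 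Concretely, your claim that $\widehat{\Pic}(\fX/\cO_K)$ is a \emph{smooth} formal group over $\cO_K$ is unjustified (line bundles are obstructed by $H^2(\cO)$, and formal smoothness over $\cO_K$ is exactly reducedness of $\Pic^0(\fX_k)$, not a consequence of the existence of the lift). Even granting reducedness, Picard--Albanese duality only gives $\dim\Pic^0(\fX_k)=\dim\mathrm{Alb}(\fX_k)=\tfrac12 b_1$, whereas in characteristic $p$ one can have $h^{1,0}(\fX_k)>\tfrac12 b_1$ (global $1$-forms not pulled back from the Albanese, e.g.\ on supersingular or unirational surfaces), so the final identification in your $i=1$ step is false. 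For $i=2$ the proposal is explicitly incomplete, and the fallback claim that Mazur's inequality forces the slope-$0$ and slope-$2$ parts of $H^2_{\cris}(\fX_k/W)$ to have dimensions $h^{0,2}$ and $h^{2,0}$ is not a theorem: Mazur only bounds the Newton polygon below by the Hodge polygon, and the equality of the slope-$0$ and slope-$2$ multiplicities is precisely the nontrivial slope symmetry (Suh; hard Lefschetz plus Poincar\'e duality in the projective case) that has to be proven.

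The paper's actual mechanism avoids the special-fiber Hodge numbers altogether: it uses the special fiber only for the Frobenius structure and the \emph{generic} fiber for the Hodge filtration. By the crystalline comparison for smooth proper formal schemes, $D=H^n_{\cris}(\fX_k/W(k))[1/p]$ with the Hodge filtration coming from $H^n_{\dR}(X/K)$ is weakly admissible, so $t_H(D)=t_N(D)$; combining this with the slope symmetry $t_N(D)=n\dim D-t_N(D)$ yields the linear relation among the $h^{i,n-i}(X)$ that gives $h^{1,0}=h^{0,1}$ and $h^{2,0}=h^{0,2}$. Both of these inputs --- weak admissibility of the generic fiber's filtration and the Frobenius slope symmetry --- are absent from your argument, and some bridge of this kind between the two fibers is unavoidable.
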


Proposition \ref{introdeg2} is a consequence of a certain self-duality of Frobenius action on the crystalline cohomology of the special fiber and weak admissibility (in the sense of filtered $\varphi$-modules) of de Rham cohomology of $X$ coming from the crystalline comparison theorem. As mentioned above, Hansen and Li proved this statement by a different method for $i=1$ in the case of (arbitrarily singular) projective reduction. Piotr Achinger has independently obtained Proposition \ref{introdeg2} in the case of smooth projective reduction by the same method, see \cite{a} for a treatment of a number of situations where Hodge symmetry does follow from the existence of a model with projective special fiber, including some cases when reduction is singular.

{\bf Notation.} Let $p$ be a prime number fixed throughout the text. If $X$ is an object of any of the three types $\{$smooth proper rigid-analytic variety over $K$, smooth proper formal scheme over $\Spf\cO_K$, smooth proper algebraic variety over an arbitrary field $F\}$, denote by $h^{i,j}(X)$ the number $\dim_K H^j(X,\Omega^i_{X/K}),\rk_{\cO_K}H^j(X,\Omega^i_{X/\cO_K})$ or $\dim_F H^j(X,\Omega^i_{X/F})$ respectively. Denote also by $\delta^{i,j}(X)$ the number $h^{i,j}(X)-h^{j,i}(X)$ in any of the three situations. For the convenience of notation, we declare $h^{i,j}(X)=0$ if $i$ or $j$ is negative. Note that $h^{i,j}(X)=h^{i,j}(\fX)$ if $X$ is the generic fiber of a smooth proper formal scheme $\fX$ by Lemma \ref{gencoh}. We say that an object $X$ ``satisfies Hodge symmetry in degree $d$" if $h^{i,j}(X)=h^{j,i}(X)$ for all $i,j$ with $i+j=d$ and $X$ ``satisfies Hodge symmetry" if it does so for all $d$.

{\bf Acknowledgments.} I am grateful to Piotr Achinger, Shizhang Li, Vadim Vologodsky, Zijian Yao and Bogdan Zavyalov for their interest and several useful discussions and to Piotr Achinger, Borys Kadets and Zhiyu Zhang for comments on the drafts of this text. I am indebted to Vasily Rogov whose talk on the work \cite{rogov} motivated the examples presented here.


\section{Hodge symmetry in degree $2$}

For the duration of this section let $K$ be a discretely valued field of characteristic zero with perfect residue field $k$ of characteristic $p$. Denote by $K_0$ the subfield $W(k)[\ip]$, and let $\sigma$ be its automorphism induced by Frobenius action on $k$.

The goal of this section is to observe that results of $p$-adic Hodge theory impose the following relation between Hodge numbers of a rigid-analytic variety with good reduction. A similar idea has been used in \cite{fm} I.4.4 to prove Hodge symmetry for smooth proper algebraic varieties over $K$.

\begin{pr}\label{deg2}
If $\fX$ is a smooth proper formal scheme over $\cO_K$ then for its generic fiber $X$ and every $n$ the following relation among $h^{i,j}:=h^{i,j}(X)$ holds:
\begin{equation}\label{deg 2: main}
h^{1,n-1}+2h^{2,n-2}+\dots+n \cdot h^{n,0}=n\cdot h^{0,n}+(n-1)\cdot h^{1,n-1}+\dots + h^{n-1,1}.
\end{equation}
\end{pr}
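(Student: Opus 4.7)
The plan is to read identity \eqref{deg 2: main} as the statement that the total Hodge number equals the total Newton number of the filtered $\varphi$-module $H^n_{\dR}(X/K)$, and to compute the total Newton slope directly using the projectivity of $\fX_k$. Concretely, after using Scholze's Hodge-to-de Rham degeneration to write $h^n_{\dR}(X) = \sum_i h^{i,n-i}(X)$, identity \eqref{deg 2: main} is equivalent to
$$\sum_{i=0}^n i \cdot h^{i,n-i}(X) \;=\; \tfrac{n}{2}\cdot h^n_{\dR}(X).$$

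First I would set up the relevant $p$-adic Hodge structure. Let $D := H^n_{\cris}(\fX_k/W(k))[\ip]$, a $\varphi$-module over $K_0$. The crystalline-de Rham comparison for the smooth proper formal scheme $\fX$ yields $D \otimes_{K_0} K \iso H^n_{\dR}(X/K)$, so $D \otimes_{K_0} K$ inherits a Hodge filtration whose graded pieces are $H^{n-i}(X,\Omega^i_X)$ by degeneration. The crystalline comparison theorem for smooth proper rigid-analytic varieties with good reduction (Bhatt--Morrow--Scholze) then identifies this filtered $\varphi$-module with the one attached to the crystalline Galois representation $H^n_\et(X_{\overline K},\bQ_p)$; in particular it is weakly admissible. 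Hence $t_N(D)=t_H(D\otimes K)$, where by the identification above $t_H(D\otimes K)=\sum_i i\cdot h^{i,n-i}(X)$.

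Next I would use projectivity to pin down $t_N(D)$. Since $\fX_k$ is smooth projective of some dimension $d$ over $k$, crystalline cohomology satisfies Poincar\'e duality $H^n_{\cris}(\fX_k)[\ip] \otimes H^{2d-n}_{\cris}(\fX_k)[\ip]\to K_0(-d)$ and Hard Lefschetz; Hard Lefschetz is known $\ell$-adically (Deligne's Weil II) and transfers to crystalline cohomology via Katz--Messing's identification of $\ell$-adic and crystalline characteristic polynomials of Frobenius. Composing these two inputs produces a Frobenius-equivariant self-duality $D \simeq D^{\vee}(-n)$, which forces the slopes of $\varphi$ on $D$ to be symmetric about $n/2$ and therefore $t_N(D) = \tfrac{n}{2}\dim_{K_0}D = \tfrac{n}{2}\cdot h^n_{\dR}(X)$. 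Assembling the two computations gives the displayed identity.

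The main obstacle, and the subtlest input, is the weak admissibility of $H^n_{\dR}(X/K)$ as a filtered $\varphi$-module for a smooth proper \emph{formal} model, which rests on the full $p$-adic comparison theorem for smooth proper rigid-analytic varieties with good reduction rather than the more classical algebraic case. The symmetry of the Newton polygon of $\varphi$ on $H^n_{\cris}(\fX_k)[\ip]$ about $n/2$ is the second delicate step: it is exactly where the projectivity hypothesis is spent (through Hard Lefschetz), and without it one should not expect slope symmetry---which is precisely why this argument cannot be pushed to deliver Hodge symmetry in higher degrees or for non-projective smooth special fibers.
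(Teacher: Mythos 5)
Your proposal is correct and follows essentially the same route as the paper: weak admissibility of $H^n_{\cris}(\fX_k/W(k))[\ip]$ with its Hodge filtration via the Bhatt--Morrow--Scholze crystalline comparison gives $t_N=t_H$, while Poincar\'e duality plus hard Lefschetz for crystalline cohomology (via Katz--Messing) gives the slope symmetry $t_N(D)=\tfrac{n}{2}\dim_{K_0}D$, which is exactly the paper's Lemma on slope symmetry (quoted there from Suh, with the same projective-case argument). The only cosmetic difference is that you make explicit the use of Hodge--de Rham degeneration to identify $t_H$ with $\sum_i i\cdot h^{i,n-i}$, which the paper leaves implicit.
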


\begin{cor}\label{cordeg2}
For a smooth proper formal scheme $\fX$ over $\Spf\cO_K$ we have the following for the generic fiber $X=\fX_K$

(i) $h^{1,0}(X)=h^{0,1}(X)$ and $h^{2,0}(X)=h^{0,2}(X)$

(ii) for odd $n$ the Betti numbers $\dim_{\bQ_p}H^n_{\acute{e}t}(X_{\overline{K}},\bQ_p)=\dim_{K}H^n_{\dR}(X/K)$ are even. 
\end{cor}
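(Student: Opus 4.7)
The plan is to obtain both parts of the corollary as short arithmetic consequences of the identity \eqref{deg 2: main} in Proposition \ref{deg2}, combined with the Hodge-to-de Rham degeneration and the \'etale--de Rham comparison of \cite{schrig}. Note, however, that Proposition \ref{deg2} is stated for formal schemes whose special fiber is \emph{projective}, whereas Corollary \ref{cordeg2} only assumes a smooth proper formal model over $\cO_K$. I expect the proof of Proposition \ref{deg2} to apply without the projectivity hypothesis, since its crystalline-cohomology inputs (Poincar\'e duality and Frobenius self-duality on $H^*_{\cris}$) are available for any smooth proper variety over the perfect field $k$, not only for projective ones. The first step is therefore to check that this extension is legitimate.

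Once that is in place, part (i) is immediate: specialize \eqref{deg 2: main} at $n=1$, which collapses to $h^{1,0}=h^{0,1}$, and at $n=2$, which reads $h^{1,1}+2h^{2,0}=2h^{0,2}+h^{1,1}$, giving $h^{2,0}=h^{0,2}$.

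For part (ii), the plan is to rewrite the identity symmetrically. Extending each side to include the vanishing terms at $i=0$ on the left and at $i=n$ on the right, \eqref{deg 2: main} becomes
\[
\sum_{i=0}^{n} i\cdot h^{i,n-i} \;=\; \sum_{i=0}^{n}(n-i)\cdot h^{i,n-i}.
\]
Adding the two sides yields $2\sum_{i=0}^{n} i\cdot h^{i,n-i} = n\sum_{i=0}^{n} h^{i,n-i}$. By Scholze's degeneration of the Hodge-to-de Rham spectral sequence for smooth proper rigid-analytic varieties, the right-hand sum equals $b_n := \dim_K H^n_{\dR}(X/K)$, and by the $p$-adic de Rham comparison theorem the same number equals $\dim_{\bQ_p} H^n_{\et}(X_{\overline{K}},\bQ_p)$. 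Hence $n\cdot b_n$ is even, and for odd $n$ this forces $b_n$ to be even. The main obstacle in this strategy is precisely the preliminary step of extending \eqref{deg 2: main} from the projective to the smooth proper special fiber setting; once that is justified, both (i) and (ii) fall out by elementary manipulations.
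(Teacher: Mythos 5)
Your proof is correct and is essentially the paper's own argument: (i) is obtained by specializing \eqref{deg 2: main} at $n=1,2$, and (ii) by adding the two sides of \eqref{deg 2: main} to conclude that $n\cdot\dim_K H^n_{\dR}(X/K)$ is even. The preliminary step you flag as the main obstacle is resolved exactly as you expect: the only place projectivity could enter the proof of Proposition \ref{deg2} is the slope symmetry of Lemma \ref{slopesym}, and Suh's result (\cite{suh}, Corollary 2.2.4) is stated for arbitrary smooth proper varieties over $k$ (the paper's in-text proof only treats the projective and finite-field cases for the reader's convenience), while the crystalline comparison of \cite{bms} likewise requires only smooth properness.
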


\begin{proof}
(i) The equality \ref{deg 2: main} takes forms $h^{1,0}=h^{0,1}$ and $h^{1,1}+2h^{2,0}=2h^{0,2}+h^{1,1}$ for $n=1,2$ respectively.

(ii) The sum of the left-hand side and the right-hand sied of \ref{deg 2: main} is, of course, even and is also equal to $n\cdot (h^{0,n}+h^{1,n-1}+\dots +h^{n,0})=n\cdot \dim_{K}H^n_{\dR}(X/K)$ so $\dim_K H^{n}_{\dR}(X/K)$ is even for odd $n$.
\end{proof}

We first recall the setup of rational $p$-adic Hodge theory. Let $\mathrm{MF}^{\varphi}_{K}$ be the category of filtered $\varphi$-modules over $K$ \cite{foncris}. Its objects are finite-dimensional vector spaces $D$ over $K_0$ equipped with a semi-linear automorphism $\varphi_D:D\to D$ and a deceasing filtration $F^iD_K$ on the $K$-vector space $D_K:=D\otimes_{K_0}K$. To every such object we can attach two integers 
\begin{equation}
t_N(D)=\sum\limits_{\alpha\in\bQ}\alpha\cdot \dim_{K_0}D_{\alpha}\text{ and }t_H(D)=\sum\limits_{i\in\bZ}i\cdot \dim_{K}\gr^i D_K
\end{equation}

Here $D_{\alpha}$ is the slope $\alpha$ direct summand of the $\varphi$-module $D$ under the Dieudonne-Manin decomposition and $\gr^iD_K:=F^iD_K/F^{i+1}D_K$ are the graded pieces of the filtration. Note that $t_N$ depends only on the semi-linear endomorphism and we will also use the notation $t_N(D)$ for a $\varphi$-module $D$ not equipped with a filtration. A filtered $\varphi$-module $D\in \mathrm{MF}^{\varphi}_{K}$ is called {\it weakly admissible} if $t_N(D)=t_H(D)$ and for every subobject $D'\subset D$ we have $t_N(D')\geq t_H(D')$. For an integer $i$ denote by $K_0(i)$ the filtered $\varphi$-module given by a one-dimensional vector space $D=K_0$ with $\varphi_d=p^{-i}\sigma$ and the filtration defined by $F^{-i}D_K=D_K,F^{-i+1}D_K=0$. For a filtered $\varphi$-module $D$ define its $i$-fold twist $D(i)$ as the tensor product $D\otimes_{K_0}K_0(i)$, we use the same notation for twisting $\varphi$-modules not equipped with a filtration.

We have the following symmetry of Frobenius slopes, due to \cite{suh}:

\begin{lm}[Corollary 2.2.4 in \cite{suh}]\label{slopesym}Let $Y$ be a smooth proper variety over $k$. For the $\varphi$-module $D=H^n_{\cris}(Y/W(k))[\ip]$ we have $t_N(D)=n\cdot\dim_{K_0}D-t_N(D)$. 
\end{lm}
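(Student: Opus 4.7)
The claim is equivalent to asserting that the Newton slopes of $D := H^n_{\cris}(Y/W(k))[\ip]$ are symmetric about $n/2$: summing slopes with multiplicities then gives $t_N(D) = (n/2)\dim_{K_0}D$, which is the stated identity. My plan is to first handle the smooth projective case by combining Poincaré duality with hard Lefschetz in crystalline cohomology, and then reduce the general smooth proper case via de Jong's alterations.

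Suppose first that $Y$ is smooth projective of dimension $d$. Crystalline Poincaré duality yields a perfect pairing of $\varphi$-modules
$$H^n_{\cris}(Y/W)[\ip] \otimes H^{2d-n}_{\cris}(Y/W)[\ip] \to H^{2d}_{\cris}(Y/W)[\ip] \cong K_0(-d),$$
and crystalline hard Lefschetz (which follows from the $\ell$-adic hard Lefschetz theorem together with the Katz--Messing comparison of characteristic polynomials of Frobenius) provides an isomorphism of $\varphi$-modules $L^{|d-n|}\colon H^{\min(n,2d-n)}_{\cris}(Y/W)[\ip] \iso H^{\max(n,2d-n)}_{\cris}(Y/W)[\ip](|d-n|)$ from cup product with the crystalline Chern class of an ample line bundle. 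Composing these identifies $D$ with $D^\vee(-n)$ as $\varphi$-modules. Since dualizing negates slopes and the twist by $(-n)$ shifts them up by $n$, the slope multiset of $D$ is stable under $\alpha \mapsto n-\alpha$, as required.

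For a general smooth proper $Y$, I would apply de Jong's alteration theorem to obtain a proper surjective generically finite morphism $\pi\colon Y' \to Y$ with $Y'$ smooth projective of the same dimension $d$. Since $\pi_*\circ\pi^* = (\deg\pi)\cdot\id$ on crystalline cohomology, the pullback $\pi^*$ realizes $D$ as a $\varphi$-submodule direct summand of $D' := H^n_{\cris}(Y'/W)[\ip]$, for which slope symmetry about $n/2$ holds by the preceding paragraph.

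The principal obstacle will be that slope symmetry does not in general descend to arbitrary $\varphi$-module direct summands. To handle this I would exploit that $\pi^*$ and $\tfrac{1}{\deg\pi}\pi_*$ are mutually adjoint under crystalline Poincaré duality on $Y$ and $Y'$, so that the self-pairing $D'\otimes D'\to K_0(-n)$ constructed in the projective step restricts on $\pi^*D$ to the pairing induced from the Poincaré-duality pairing on $D$ composed with cup product by the pushforward class $\pi_*(c_1(L')^{d-n}) \in H^{2(d-n)}_{\cris}(Y/W)[\ip]$. The essential remaining input is the nondegeneracy of this induced pairing on $D$ --- a form of hard Lefschetz on $Y$ for the pushforward of the ample class from $Y'$ --- from which one obtains $D\cong D^\vee(-n)$ and thus the desired slope symmetry.
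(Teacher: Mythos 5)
Your first paragraph is exactly the paper's argument for the projective case: crystalline hard Lefschetz (via Katz--Messing \cite{km}) combined with Poincar\'e duality gives $D\simeq D^{\vee}(-n)$, hence symmetry of the slope multiset under $\alpha\mapsto n-\alpha$. The problem is the general smooth proper case, and there your argument has a genuine gap --- one you flag yourself but do not close. After passing to an alteration $\pi\colon Y'\to Y$, the Lefschetz pairing $\langle a,b\rangle_{L'}=\int_{Y'}a\cup b\cup c_1(L')^{d-n}$ is indeed nondegenerate on $D'=H^n_{\cris}(Y'/W(k))[\ip]$, and its restriction to $\pi^*D$ is, as you say, $(x,y)\mapsto\int_Y x\cup y\cup\pi_*\bigl(c_1(L')^{d-n}\bigr)$. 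But nothing forces this restriction to be nondegenerate: the idempotent $\tfrac{1}{\deg\pi}\pi^*\pi_*$ cutting out $\pi^*D$ is self-adjoint for the Poincar\'e pairing (projection formula) but \emph{not} for the Lefschetz pairing, because $\pi^*\pi_*$ does not commute with cup product by $c_1(L')^{d-n}$; so $\pi^*D$ need not be orthogonal to its complement in $D'$. Nondegeneracy of cup product with the single class $\pi_*\bigl(c_1(L')^{d-n}\bigr)\in H^{2(d-n)}_{\cris}(Y/W(k))[\ip]$ is not an instance of the hard Lefschetz theorem, which applies only to powers of an ample divisor class --- and a smooth proper non-projective $Y$ may carry no ample class at all. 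Such a ``Lefschetz property for pushforwards of ample classes'' is essentially of the same depth as the statement you are trying to prove. Without it, all you obtain is that $D$ is a $\varphi$-module direct summand of a module with symmetric slopes, which constrains nothing: a direct summand can carry an arbitrary sub-multiset of the slopes (e.g.\ $K_0(0)$ inside $K_0(0)\oplus K_0(-1)$ for $n=1$).

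For comparison: the paper does not prove the general case either. It cites Corollary 2.2.4 of \cite{suh} for the full statement and only recalls the proof in two special cases --- projective $Y$ over an arbitrary perfect field (your first paragraph verbatim), and proper $Y$ over a finite field, where Katz--Messing reduces the claim to the stability of the multiset of Frobenius eigenvalues under $\alpha\mapsto q^n/\alpha=\overline{\alpha}$, i.e.\ to the Weil conjectures. (Note that the paper's Corollary \ref{cordeg2} genuinely needs the non-projective case, since the special fiber of a smooth proper formal scheme need not be projective.) To have a complete proof you should either reproduce Suh's argument for the general case or explicitly restrict your lemma to the two cases above.
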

\begin{proof}
For the convenience of the reader we recall the proof in the case when $Y$ is projective over an arbitrary perfect field or arbitrary proper defined over a finite field.

If $Y$ is projective, denote by $L\in H^2_{\cris}(Y/W(k))$ the crystalline Chern class of an ample line bundle on $Y$. The hard Lefschetz theorem for crystalline cohomology \cite{km} shows that cup-product with $L^{d-\min(n,2d-n)}$ induces an isomorphism $H^n_{\cris}(Y/W(k))[\ip](n-d)\simeq H^{2d-n}_{\cris}(Y/W(k))[\ip]$ of $\varphi$-modules over $K_0$. On the other hand, Poincare duality provides an isomorphism $H^{2d-n}_{\cris}(Y/W(k))[\ip]\simeq H^n_{\cris}(Y/W(k))^{\vee}[\ip](-d)$. Combining these isomorphisms we get $H^n_{cris}(Y/W(k))[\ip](n)\simeq H^n_{\cris}(Y/W(k))[\ip]^{\vee}$. Since $t_N(D^{\vee}(-n))=-t_N(D(n))=-t_N(D)+n\cdot \dim_{K_0}D$ we get the desired equality.

If $Y$ is not projective but is defined over a finite field $k=\bF_{q}$ the result follows from Weil conjectures: the multiset of $q$-power Frobenius eigenvalues consists of algebraic integers and is stable under complex conjugation. Hence, it is stable under the operation $\alpha\mapsto q^n\alpha^{-1}=\overline{\alpha}$ so the multiset of slopes is stable under $\lambda\mapsto n-\lambda$, as desired.
\end{proof}

\begin{proof}[Proof of Proposition \ref{deg2}]
Consider the filtered $\varphi$-module given by $D=H^n_{\cris}(\fX_k/W(k))[\ip]$ with the Frobenius structure induced by $\Fr:\fX_k\to\fX_k\times_{k,\sigma}k$ and the filtration on $D_K$ is given by the Hodge filtration on the de Rham cohomology of the generic fiber $H^n_{\dR}(X/K)\simeq H^n_{\cris}(\fX/W(k))\otimes_{W(k)}K=D_K$. 

By the crystalline comparison theorem proved in \cite{bms} Theorem 1.1(i) for smooth proper formal schemes, the filtered $\varphi$-module $D$ is obtained by applying the functor $D_{\cris}$ to the $p$-adic Galois representation $H^n_{\et}(X_{\overline{K}},\bQ_p)$. In particular, $D$ is weakly admissible by Proposition 4.4.5 \cite{foncris}. 

Hence, $t_H(D)=t_N(D)$ so by Lemma \ref{slopesym} we get $t_H(D)=n\cdot \dim_K H^n_{\dR}(X/K)-t_H(D)$. As $t_H(D)=h^{1,{n-1}}+2h^{2,n-2}+\dots + n\cdot h^{n,0}$ and $\dim_K H^n_{\dR}(X/K)=h^{0,n}+h^{1,n-1}+\dots + h^{n,0}$ we get the desired equality.
\end{proof}

\begin{rem}\label{admissfutile}
The above proof used only that the endpoints of the Newton and Hodge polygons of $D$ coincide. The full strength of weak admissibility, however, does not put any further restrictions on the Hodge numbers. For any tuple of non-negative integers $h^{0,n},h^{1,n-1},\dots, h^{n,0}$ satisfying (\ref{deg 2: main}) equip the vector space $D:=\bQ_p^b$ of dimension $b=h^{0,n}+h^{1,n-1}+\dots + h^{n,0}$ with an endomorphism $\varphi$ that has irreducible characteristic polynomial with all roots having valutation $a/b$ where $a=h^{1,n-1}+2h^{2,n-2}\dots + n\cdot h^{n,0}$. Endowing $D$ with an arbitrary filtration $F^iD$ such that $\dim_{\bQ_p}F^iD=h^{i,n-i}+h^{i+1,n-i-1}+\dots+h^{n,0}$ turns $D$ into an object of $\mathrm{MF}_{\bQ_p}^{\varphi}$ with Hodge numbers $h^{i,n-i}$ which is weakly admissible just because there are no proper non-zero submodules stable under $\varphi$. Alternatively, we can apply Theorem 1 of \cite{fr} that shows the existence of a weakly admissible module over $W(\overline{k})[\ip]$ with any given Newton polygon and any Hodge polygon having the same endpoints and lying below it. 
\end{rem}

\section{Cyclic group acting on a formal abelian scheme}

Fix once and for all a prime number $l$ satisfying the following condition: \begin{equation}\label{lcondition}l\neq p\textit{ and the order of }p\textit{ in the multiplicative group }(\bZ/l)^{\times}\textit{ is divisible by }4\end{equation} This condition is equivalent to $l$ being a prime divisor of a number of the form $p^{2r}+1$. Denote by $k$ the finite field $\bF_{p}(\mu_l)$ obtained by adjoining to $\bF_p$ all $l$-th roots of unity and by $\kappa\subset k$ the finite field $\kappa=\bF_{p^2}$. Let $G$ denote the cyclic group $\bZ/l$ with a chosen generator $\sigma \in G$. We will denote by $\bZ[\mu_l]$ the ring of integers of the cyclotomic extension $\bQ(\mu_l)$ inside a chosen algebraic closure $\obQ$.

The goal of this section is to exploit the existence of abelian varieties over a finite field with multiplication by $\bZ[\mu_l]$ such that the resulting CM type is different from those appearing over characteristic zero fields. This allows us to construct a formal abelian scheme with an action of $G$ having asymmetric dimensions of invariant spaces on Hodge cohomology groups in degree $3$:

\begin{pr}\label{formabineq}
There exists a formal abelian scheme $\fZ$ with an action of $G$ over $\Spf W(k')$ for some finite field $k'$ such that \begin{equation}\label{formalabineq:noneq}\rk H^0(\fZ,\Omega^3_{\fZ/W(k')})^G< \rk H^3(\fZ, \cO)^G.\end{equation}
\end{pr}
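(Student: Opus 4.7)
Proof proposal. The plan is to construct $\fZ$ as the Serre-Tate canonical lift over $W(k')$ of a CM abelian variety $A_0$ over a finite field $k'$ whose $\bZ[\mu_l]$-multiplication realizes a CM type $\Phi$ that is not allowed in characteristic zero. The extra flexibility of CM types in positive characteristic --- guaranteed by Honda-Tate theory and enabled by the divisibility condition on $\ord_l(p)$ --- is the source of the asymmetry.

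First, I would choose $\Phi \subset (\bZ/l)^\times$ of size $g := (l-1)/2$ with $\Phi \cap (-\Phi) \neq \emptyset$, such that for $\Psi := (\bZ/l)^\times \setminus (-\Phi)$ the combinatorial quantities
\[ N_A := \#\bigl\{\{a,b,c\}\subset A : a+b+c \equiv 0 \pmod l\bigr\} \qquad (A \in \{-\Phi,\Psi\}) \]
satisfy $N_{-\Phi} < N_\Psi$. For instance, with $l=13$, $p=2$, and $\Phi=\{1,2,3,4,5,12\}$ one checks directly that $N_{-\Phi}=0$ and $N_{\Psi}=3$. By Honda-Tate theory together with the Shimura-Taniyama slope formula, there exists an abelian variety $A_0$ of dimension $g$ over some finite extension $k'$ of $\bF_p(\mu_l)$ with an embedding $\bZ[\mu_l] \hookrightarrow \End_{k'}(A_0)$ such that $\Lie(A_0)$ has $G$-characters $\Phi$ (with $G = \bZ/l$ acting via $\sigma \mapsto \zeta_l$).

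Let $\fZ$ be the Serre-Tate canonical lift of $A_0$ to $W(k')$; it is a formal abelian scheme of dimension $g$ to which the $\bZ[\mu_l]$-action lifts uniquely. Since $|G| = l$ is coprime to $p$, the modules $\omega_\fZ$ and $H^1(\fZ, \cO)$ decompose into $G$-isotypic pieces after extending scalars to a sufficiently large extension $\overline{K_0}$ of $K_0 := W(k')[1/p]$. By construction $\omega_\fZ = \Lie(\fZ)^*$ has $G$-characters $-\Phi$. The $G$-characters of $H^1(\fZ, \cO)$ can be read off from the identity between $H^1_{\dR}(\fZ)$ and $H^1_{\et}((\fZ_K)_{\overline{K}}, \ql)$ for any $\ell \nmid lp$ (both being Weil cohomology theories, they share their $G$-character by Lefschetz). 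The latter is dual to $T_\ell(A_0) \otimes \ql$, which is free of rank $1$ over $\bZ[\mu_l]\otimes \ql$ by the full-CM hypothesis, and hence has $G$-characters $\{\chi_a : a\in (\bZ/l)^\times\}$, each with multiplicity $1$. The Hodge filtration $0 \to \omega_\fZ \to H^1_\dR(\fZ) \to H^1(\fZ,\cO) \to 0$ splits as a sequence of $G$-modules (again because $|G|$ is prime to $p$), so the $G$-characters of $H^1(\fZ,\cO)$ are forced to be exactly $\Psi = (\bZ/l)^\times \setminus (-\Phi)$.

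Using the standard identifications $H^0(\fZ, \Omega^j_{\fZ/W(k')}) = \wedge^j \omega_\fZ$ and $H^j(\fZ, \cO) = \wedge^j H^1(\fZ, \cO)$ for abelian schemes, the $G$-invariant ranks in the proposition are $N_{-\Phi}$ and $N_\Psi$, yielding the claimed strict inequality. The main substantive input --- and the hardest part of the argument --- is the construction of $A_0$ realizing a prescribed non-classical CM type $\Phi$: this relies on Honda-Tate theory and the Shimura-Taniyama formula, with the divisibility condition $4 \mid \ord_l(p)$ ensuring that the slope constraints are compatible with the desired $\Phi$. The subsequent character computation is then a formal consequence of the Tate module being free of rank one over $\bZ[\mu_l] \otimes \bZ_\ell$.
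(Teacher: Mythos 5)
Your underlying idea is the same one the paper exploits (non-classical CM types of abelian varieties over finite fields, cf.\ Example 4.1.2 of \cite{cco}), but your execution is more direct: you want a single CM abelian variety whose Lie type $\Phi$ is prescribed in advance so that the count of sum-zero triples already comes out asymmetric, whereas the paper pins down only one specific $A$ (whose Lie type satisfies $T=pT^{c}$, Lemmas \ref{modpabvar}--\ref{ablift}), multiplies by an algebraic CM abelian scheme with a genuine characteristic-zero CM type, and runs a general combinatorial argument (Lemma \ref{choosetypical}). Your route would be shorter if completed, but as written it has three genuine gaps.

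First, the lift you invoke does not exist as stated: by the Shimura--Taniyama formula the slope of $A_0$ at each place $v\mid p$ is $|\Phi\cap H_v|/f_v$, and since every place of $\bQ(\mu_l)$ above $p$ is self-conjugate here, compatibility with a Weil number forces all slopes to equal $1/2$. So $A_0$ is never ordinary and has no Serre--Tate canonical lift. The lift must instead be produced by crystalline deformation theory: $H^1_{\cris}(A_0/W(k'))$ is free of rank one over $\bZ[\mu_l]\otimes W(k')$, so the Hodge filtration admits a unique $G$-stable lift and Messing's theorem gives the formal abelian scheme (this is exactly the paper's Lemma \ref{ablift}(ii)); once repaired this way your character computation for $\omega_{\fZ}$ and $H^1(\fZ,\cO)$ is fine. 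Second, the existence of $A_0$ with \emph{prescribed} Lie type $\Phi$ is the real content and is not established by ``Honda--Tate plus Shimura--Taniyama'': Honda--Tate produces an isogeny class and Shimura--Taniyama only constrains slopes, not the Lie type, which varies within the isogeny class. One must additionally argue that every $\Phi$ satisfying the slope constraint $|\Phi\cap H_v|=f_v/2$ on each $\langle p\rangle$-coset $H_v$ is realized by a suitable $\bZ[\mu_l]$-stable Dieudonn\'e lattice (true, but it is a lattice-manipulation argument you have omitted, and it also restricts which $\Phi$ you may choose --- a constraint absent from your setup). Third, your combinatorial input is verified only for $(p,l)=(2,13)$; the proposition must hold for the prime $p$ fixed throughout the paper, so you still need a general argument that for every admissible $(p,l)$ there is a slope-compatible $\Phi$ with $N_{-\Phi}<N_{\Psi}$ --- this is precisely the role of Lemma \ref{choosetypical} in the paper, and nothing in your proposal replaces it for general $p$.
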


\begin{rem}
The proof also produces examples with inequalities going in the other direction, we record here this particular version for the convenience of applying it to Lemma \ref{specialfibersym}.
\end{rem}

We first collect the necessary facts about abelian varieties in Lemmas \ref{modpabvar}-\ref{geomtypical} and then use them together with the combinatorial Lemma \ref{choosetypical} to prove the Proposition \ref{formabineq} at the end of this section. All the facts about abelian varieties used in this section are contained in \cite{cco} (especially, cf. Example 4.1.2 there). We include here the proofs in an attempt to make the exposition relatively self-contained.

\begin{lm}\label{modpabvar}
There exists an abelian variety $A$ over $\kappa$ of dimension $\frac{l-1}{2}$ equipped with an action of $G$ such that the eigenvalues of $\sigma\in G$ on $H^1_{\cris}(A/W(\kappa))\otimes_{W(\kappa)}W(k)[\ip]$ are the $l-1$ pairwise different nontrivial roots of unity of order $l$ and the $p^2$-Frobenius endomorphism of $A$ is given by $\sigma\circ[p]_A$.
\end{lm}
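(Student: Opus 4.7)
The plan is to apply Honda--Tate theory to the Weil $p^2$-number $\pi := \zeta_l \cdot p$, where $\zeta_l$ denotes a primitive $l$-th root of unity in $\obQ$. The Frobenius identity $\pi_A = \sigma\circ [p]_A$ will be part of the construction, and the eigenvalue assertion will follow from the CM structure. The main analytic input is that the Brauer invariants of the associated division algebra all vanish, which is where the hypothesis (\ref{lcondition}) enters.

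First I would check that $\pi$ is a Weil $p^2$-number (immediate from $|\zeta_l|=1$ at every complex place), that $\bQ(\pi)=\bQ(\mu_l)$, and then compute the local invariants of the central simple algebra $D$ over $\bQ(\mu_l)$ attached to $\pi$. All archimedean invariants vanish because $\bQ(\mu_l)$ is totally imaginary; invariants at finite places $v\nmid p$ vanish because $\pi$ is a unit there. At a prime $v\mid p$ of $\bQ(\mu_l)$ (necessarily unramified since $p\ne l$, with residue degree $f=\ord_{(\bZ/l)^{\times}}(p)$) the Honda--Tate formula gives
\[
\mathrm{inv}_v(D)=\frac{v(\pi)}{v(p^2)}\cdot [\bQ(\mu_l)_v:\bQ_p] = \frac{f}{2}\pmod{\bZ},
\]
which vanishes under (\ref{lcondition}). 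Hence $D=\bQ(\mu_l)$, and Honda--Tate produces a simple abelian variety $A_0/\kappa$ with $\End^0(A_0)=\bQ(\mu_l)$, $\pi_{A_0}=\zeta_l p$ under a suitable identification, and $\dim A_0=\tfrac12[\bQ(\mu_l):\bQ]=(l-1)/2$.

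Next I would replace $A_0$ by an isogenous abelian variety $A$ with $\bZ[\mu_l]\subseteq \End(A)$ (a standard manoeuvre in CM theory, obtained by passing to a $\bZ[\mu_l]$-saturated isogenous quotient) and define the $G$-action by letting $\sigma$ act as $\zeta_l\in \bZ[\mu_l]^{\times}\subset \Aut(A)$. The identity $\pi_A=\zeta_l p=\sigma\circ[p]_A$ in $\End(A)$ is then tautological. For the eigenvalue statement I would use that, $A$ being simple of dimension $(l-1)/2$ with $\End^0(A)=\bQ(\mu_l)$ of degree $2\dim A$, the first crystalline cohomology $H^1_{\cris}(A/W(\kappa))[\ip]$ is free of rank one over $\bQ(\mu_l)\otimes_{\bQ}K_0$ (faithfulness of the CM action forces this for dimension reasons, since each local factor of the product $\bQ(\mu_l)\otimes K_0$ must contribute a nonzero summand). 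Since $\mu_l\subset k$, the further base change $\bQ(\mu_l)\otimes_{\bQ}W(k)[\ip]$ decomposes as $\prod_{\tau}W(k)[\ip]$ indexed by the $l-1$ embeddings $\tau\colon\bQ(\mu_l)\hookrightarrow W(k)[\ip]$, with $\zeta_l$ acting on the $\tau$-factor by $\tau(\zeta_l)$. The base-changed $H^1_{\cris}$ therefore splits as a direct sum of $l-1$ one-dimensional $\sigma$-eigenspaces whose eigenvalues realise each primitive $l$-th root of unity exactly once.

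The main obstacle is the Brauer invariant computation: it requires careful tracking of the local normalisations, and it is precisely at the place above $p$ that the hypothesis (\ref{lcondition}) becomes essential. Everything else is standard: the realisation of $\bZ[\mu_l]$ inside the endomorphism ring of some isogenous $A$, the tautological Frobenius identity, and the free rank-one CM structure on $H^1_{\cris}$.
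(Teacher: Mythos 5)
Your proposal is correct and follows essentially the same route as the paper: Honda--Tate applied to the Weil $p^2$-number $p\zeta_l$ (you compute the Brauer invariants explicitly where the paper cites Tate's dimension formula, and you saturate the order where the paper uses Serre's tensor product $\bZ[\zeta_l]\otimes_{\bZ[p\zeta_l]}A'$ -- these are equivalent devices), followed by reading off the $\sigma$-eigenvalues from the CM structure. The one soft spot is your parenthetical justification that $H^1_{\cris}(A/W(\kappa))[\ip]$ is free of rank one over $\bQ(\mu_l)\otimes_{\bQ}K_0$: faithfulness of the action of the field $E=\bQ(\mu_l)$ does not formally imply faithfulness of the $E\otimes_{\bQ}K_0$-module structure (a module over a product of fields can be faithful over a diagonally embedded subfield yet be killed by some primitive idempotent), so one should instead invoke the standard equality of characteristic polynomials of endomorphisms on crystalline and \'etale cohomology -- which is in effect how the paper concludes.
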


\begin{proof}
Let $\zeta_l$ be a primitive $l$-th root of unity in $\obQ$. By Honda--Tate theory, there exists an abelian variety $A'$ (unique up to isogeny) over $\kappa$ with eigenvalues of the $p^2$-Frobenius endomorphism on first etale cohomology given by the conjugates of the Weil number $p\cdot\zeta_l$. Since $[\bQ_p(\mu_l):\bQ_p]=\ord_{(\bZ/l)^{\times}}p$ is even we have $2\dim A'=[\bQ(\mu_l):\bQ]=l-1$ by Theoreme 1(ii) of \cite{t}. If $\varphi_{A'}$ denotes the $p^2$-Frobenius endomorphism of $A'$ then $\varphi_{A'}^l$ acts by $p^l$ on $H^1_{\et}(A'_{\overline{\bF}_p},\bZ_t)$ (where $t$ is any prime different from $p$) so $\varphi_{A'}^l$ is equal to the multiplication-by-$p^l$ endomorphism of $A'$. Hence, $\varphi_{A'}$ induces the action of the subalgebra $\bZ[p\zeta_l]\subset \bZ[\mu_l]$ on the abelian variety $A'$ with $p\zeta_l$ acting by $\varphi_{A'}$.

Proposition 1.7.4.4 of \cite{cco} shows that the Serre's tensor product $A:=\bZ[\zeta_l]\otimes_{\bZ[p\zeta_l]}A'$ is an abelian variety isogenous to $A'$ that has an action of $G$ with $\sigma$ acting as $\zeta_l$. The eigenvalues of $\sigma$ on $H^1_{\cris}(A/W(\kappa))$ are equal to the eigenvalues of $\varphi_A$ divided by $p$ and, since characteristic polynomials of $\varphi_A$ on crystalline and etale cohomology are equal, the eiegnvalues of $\sigma$ are the $l-1$ Galois conjugates of $\zeta_l$, as desired.
\end{proof}

\begin{rem}
The variety $A'$ can be constructed as the quotient $(\Res_{\bF_{p^2}}^{\bF_{p^{2l}}}(E\times_{\bF_{p^2}}\bF_{p^{2l}}))/E$ for a supersingular elliptic curve $E$ over $\bF_{p^2}$ with the trace of Frobenius endomorphism equal to $2p$.
\end{rem}


The map $g\mapsto g^p$ is an automorphism of the group $G$. For a representation $\rho:G\to \Aut(V)$ on a module $V$ we denote by $V^{\tau}$ the representation on the same module in which $g\in G$ acts by $\rho(g^p)$.
\begin{lm}\label{ablift}
For any $A$ as in Lemma \ref{modpabvar} we have the following:

(i) the representations $H^1(A,\cO)$ and $H^0(A,\Omega^1_{A/\kappa})^{\tau}$ are isomorphic.

(ii) $A$ lifts to a formal abelian scheme $\fA$ over $\Spf W(\kappa)$ together with an action of $G$ such that the representations $H^1(\fA,\cO)$ and $H^0(\fA,\Omega^1_{\fA/W(\kappa)})^{\tau}$ are isomorphic.
\end{lm}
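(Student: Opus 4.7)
The plan is to analyze the Dieudonn\'e module $M := H^1_{\cris}(A/W(\kappa))$ with its $G$-action, extract the $G$-characters of $\omega_A := H^0(A,\Omega^1)$ and $H^1(A,\cO)$ using the hypothesis $F^2 = p\sigma$ on $M$, and then lift the Hodge filtration equivariantly via Grothendieck--Messing. Let $\varphi$ denote the Witt vector Frobenius on $W(k)$, whose restriction to $W(\kappa) = W(\bF_{p^2})$ has order two. First I would base-change to $W(k)$: since $|G| = l$ is invertible and $\mu_l \subset k$, the module $M_k := M \otimes_{W(\kappa)} W(k)$ splits as $\bigoplus_{a \in (\bZ/l)^\times} M_k[a]$, each a free rank-one $W(k)$-module on which $\sigma$ acts as $\zeta_l^a$. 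The Frobenius $F$ and Verschiebung $V$ on $M_k$ commute with $G$ and are $\varphi^{\pm 1}$-semilinear, so $F$ sends $M_k[a]$ into $M_k[ap]$ while $V$ sends it into $M_k[ap^{-1}]$. Choose generators $v_a \in M_k[a]$ and write $F(v_a) = \beta_a v_{ap}$, $V(v_a) = \alpha_a v_{ap^{-1}}$; the relation $FV = p$ then yields $\varphi(\alpha_a)\beta_{ap^{-1}} = p$, hence $v_p(\alpha_a) + v_p(\beta_{ap^{-1}}) = 1$.

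The crucial step is to exploit $F^2 = p\sigma$ on $M$. Since $\varphi|_{W(\kappa)}$ has order two, $F$ and $V$ are both $\varphi|_{W(\kappa)}$-semilinear on $M$, and combining $F^2 = p\sigma$ with $FV = p$ yields $V = F\sigma^{-1}$ as operators on $M$. Bijectivity of $\sigma$ on $M$ gives the set-theoretic equality $V(M) = F(M)$ as $W(\kappa)$-submodules, which after base change becomes $V(M_k) = F(M_k)$ as $W(k)$-submodules of $M_k$. Extracting the $M_k[a]$-components gives $W(k)\cdot\alpha_{ap}v_a = W(k)\cdot\beta_{ap^{-1}}v_a$, so $v_p(\alpha_{ap}) = v_p(\beta_{ap^{-1}})$; combined with the previous relation this produces the key identity
\[
v_p(\alpha_a) + v_p(\alpha_{ap}) = 1 \qquad \text{for all } a \in (\bZ/l)^\times.
\]
Hence each $v_p(\alpha_a) \in \{0,1\}$ and the set $U := \{a : v_p(\alpha_a) = 0\}$ satisfies $pU = (\bZ/l)^\times \setminus U$. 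Since $\omega_A \otimes_\kappa k = V(M_k)/pM_k$, its $\chi_a$-component is nonzero exactly when $v_p(\alpha_{ap}) = 0$, so the character set of $\omega_A \otimes k$ is $S = p^{-1}U = (\bZ/l)^\times \setminus U$, while that of $H^1(A,\cO) \otimes k$ is $T = U = pS$. As the twist $\tau\colon g \mapsto g^p$ sends $\chi_a$ to $\chi_{ap}$, the $G$-representations $H^0(A,\Omega^1)^\tau$ and $H^1(A,\cO)$ share their $k$-valued characters and are therefore isomorphic as $\kappa[G]$-modules by Maschke's theorem (using that $l$ is invertible in $\kappa$). This proves (i).

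For (ii), I would invoke Grothendieck--Messing (combined with Serre--Tate), which identifies $G$-equivariant formal lifts of $A$ to $\Spf W(\kappa)$ with $G$-stable $W(\kappa)$-direct summands $\omega_W \subset M$ reducing to $\omega_A$ modulo $p$. The identity $v_p(\alpha_{ap^2}) = v_p(\alpha_a)$ (an immediate consequence of the key identity) shows that $S$ is stable under the Galois action $a \mapsto ap^2$ of $\Gal(k/\kappa)$ on characters, so the decomposition of $M$ into $W(\kappa)[G]$-isotypic components furnishes a unique such $\omega_W$: the sum of the isotypic components indexed by Galois orbits contained in $S$. The resulting $\fA$ satisfies $H^0(\fA,\Omega^1_{\fA/W(\kappa)}) = \omega_W$ and $H^1(\fA,\cO) = M/\omega_W$, with $G$-characters equal to those of the special fiber, so the same character argument concludes the required isomorphism of $W(\kappa)[G]$-modules.

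The main technical difficulty is the Dieudonn\'e computation: the equality $V(M) = F(M)$ relies essentially on $\varphi|_{W(\kappa)}$ having order two (so that $F$ and $V$ admit a common semilinearity on $M$, permitting the rewriting $V = F\sigma^{-1}$), and the subsequent extraction of the valuation identity requires careful tracking of the $\varphi^{\pm 1}$-semilinear actions on $M_k$, where this rewriting no longer holds directly.
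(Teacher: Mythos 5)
Your proof is correct, and its architecture matches the paper's: both parts hinge on the fact that the $p^2$-Frobenius is $p$ times an automorphism, and part (ii) in both cases lifts the Hodge filtration $G$-equivariantly via Messing's classification of formal lifts by direct-summand lifts of $F^1$ in $H^1_{\cris}$, using multiplicity-freeness of the $G$-action to get a canonical (hence $G$-stable) lift. The difference is in how part (i) is executed. The paper works mod $p$: since $\Fr_A^2=[p]_A$ up to an automorphism, $\Fr_A^*$ squares to zero on $H^1_{\dR}(A/\kappa)$, and a dimension count identifies the conjugate filtration with the Hodge filtration, giving the $G$-equivariant isomorphism $H^1(A^{(1)},\cO)\simeq H^0(A,\Omega^1)$ directly, with the $\tau$-twist appearing as the Frobenius twist $(-)^{(1)}$. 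You instead work integrally on the Dieudonn\'e module, deriving $V=F\sigma^{-1}$, hence $VM=FM$ (the integral form of ``conjugate $=$ Hodge''), and then read off the character sets of $\omega_A=VM/pM$ and $M/VM$ from the valuations $v_p(\alpha_a)$ on the rank-one eigenlines over $W(k)$. Your route is more computational but buys something concrete: the identity $v_p(\alpha_a)+v_p(\alpha_{ap})=1$ gives you for free that the character set $S$ of $\omega_A$ is stable under $a\mapsto ap^2$, which you then reuse in (ii) to exhibit the lift of the Hodge filtration explicitly as a sum of $W(\kappa)[G]$-isotypic components, where the paper argues slightly more abstractly (unique $G$-stable splitting mod $p$, lifted by rigidity of prime-to-$p$ representations over $W(\kappa)$). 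Two minor points of hygiene: the descent of the isomorphism in (i) from $k$ to $\kappa$ is by Noether--Deuring (or: semisimple $\kappa[G]$-modules are determined by their characters) rather than Maschke alone; and the classification of formal lifts over all of $W(\kappa)$ is Messing's Theorem V.1.10 (iterated Grothendieck--Messing), which is exactly what the paper cites.
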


\begin{proof}
\newcommand{\kp}{\kappa}
(i) Denote by $A^{(1)}=A\times_{\kappa,\Fr}\kappa$ the Frobenius-twist of $A$. For a vector space $W$ over $\kappa$ denote as well by $W^{(1)}:=W\otimes_{\kappa,\Fr}\kappa$ its twist. Note that the double twist $A^{(2)}:=(A^{(1)})^{(1)}$ is canonically identified with $A$.

The de Rham cohomology $H^1_{\dR}(A/\kappa)$ carries the Hodge filtration $H^1_{\dR}(A/\kp)=F^0\supset F^1\supset F^2=0$ and the conjugate filtration $H^1_{\dR}(A/\kp)=G_2\supset G_1\supset G_0=0$ satisfying $F^0/F^1=H^0(A,\Omega^1_{A/\kp}),F^1/F^2=H^1(A,\cO),G_2/G_1=H^0(A^{(1)},\Omega^1_{A^{(1)}/\kp}),G_1/G_0=H^1(A^{(1)},\cO)$, see \S 7 of \cite{k}. The compositions $$H^1(A^{(1)},\cO)=G_1\to H^1_{\dR}(A/\kp)\to F^0/F^1=H^1(A,\cO)$$ and $$H^1_{\dR}(A^{(1)}/\kappa)=H^1_{\dR}(A/\kp)^{(1)}\to (F^0/F^1)^{(1)} = H^1(A,\cO)^{(1)}=G_1/G_0\to H^1_{\dR}(A/\kp)$$ are both induced by the relative Frobenius morphism $\Fr_A:A\to A^{(1)}$. 

Since $\Fr_A^2$ is equal to the multiplication by $p$ endomorphism $[p]_A$ up to composing with an automorphism, $\Fr_A^2$ induces the zero map on $H^1_{\dR}(A/k)$. Equivalently, $\ker(\Fr_A^*:H^1_{\dR}(A^{(1)}/\kp)\to H^1_{\dR}(A/\kp))$ contains $\im(\Fr_A^*:H^1_{\dR}(A/\kp)=H^1_{\dR}(A^{(2)}/\kp)\to H^1_{\dR}(A^{(1)}/\kp))=G_1^{(1)}$. The sum of the dimensions of these two vector space is $\dim_{\kp} H^1_{\dR}(A/\kp)=2g$ and dimension of the image is equal to $g$, so the containment is in fact equality and we get $G_1=F^1$. 

This gives a $G$-equivariant $\kp$-linear isomorphism $H^1(A^{(1)},\cO)\simeq H^0(A,\Omega^1_{A/\kp})$. If $V$ is a representation of $G$ on a vector space over a field of characteristic $p$ then the  character of Frobenius-twisted representation $V^{(1)}$ is  equal to that of $V^{\tau}$ so the first assertion of the lemma follows.

(ii) By Theorem V.1.10 of \cite{m} the category of formal abelian schemes over $W(\kp)$ is equivalent to the category of pairs $(A_0, \tF)$ where $A_0$ is an abelian variety over $\kp$ and $\tF$ is a $W(\kp)$-submodule of $H^1_{\cris}(A_0/W(\kp))$ such that the quotient $H^1_{\cris}(A_0/W(\kp))/\tF$ is torsion-free and $\tF/p$ is equal to the first step of Hodge filtration $F^1\subset H^1_{\dR}(A/\kp)=H^1_{cris}(A/W(\kp))/p$. 

Since the eigenvalues of $\sigma$ on $H^1_{\cris}(A/W(\kp))$ are pair-wise different, the same is true for the action of $\sigma$ on $H^1_{\dR}(A/\kp)$ so there is a unique isomorphism $H^1_{\dR}(A/\kp)\simeq H^0(A,\Omega^1_{A/\kp})\oplus H^1(A,\cO)$ of $G$-representations splitting the Hodge filtration on $H^1_{\dR}(A/\kp)$. Since the isomorphism class of a representation of $G$ on a finite free $W(\kp)$-module is completely determined by its reduction modulo $p$, there is a unique $G$-equivariant decomposition $H^1_{\cris}(A/W(\kp))=\tF\oplus\tF'$ lifting the decomposition of the de Rham cohomology. The formal lift $\fA$ of $A$ defined by $\tF$ then comes equipped with a lift of the action of $G$ because $\tF\subset H^1_{\cris}(A/W(\kp))$ is stable under $G$. The assertion about the action of $G$ on Hodge cohomology groups of $\fA$ is a formal consequence of (i).
\end{proof}

The result of Lemma \ref{ablift}(i) is specific to positive characteristic: for an abelian variety $B$ with an action of $G$ over a field $F$ of characteristic zero the representations $H^0(B,\Omega^1_{B/F})$ and $H^1(B,\cO)^{\vee}$ have to be isomorphic by the next lemma, whereas for $A$ as above the representations $H^0(A,\Omega^1_{A/\kappa})$ and $H^1(A,\cO)^{\vee}\simeq H^0(A,\Omega^1_{A/\kappa})^{\tau \vee}$ are self-dual and are not isomorphic to each other (in fact, any representation of $G$ on a $\bF_{p^2}$-vector space has to be self-dual because $-1=p^{2r}$ in $(\bZ/l)^{\times}$ for some $r$ by the assumption on $l$). The same is happening for the lift $\fA$, so, in particular, it has to be non-algebraizable.

\begin{lm}\label{char0dual}
For an abelian variety $B$ over a field $F$ of characteristic zero equipped with an action of a finite group $H$ the representations $H^0(B,\Omega^1_{B/F})$ and $H^1(B,\cO)^{\vee}$ are isomorphic. 
\end{lm}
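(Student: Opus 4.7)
The plan is to construct an $H$-equivariant polarization $\phi:B\to B^{\vee}$ by averaging a chosen polarization over $H$, and then to use the characteristic zero hypothesis to deduce that $\Lie(B)$ and $\Lie(B^{\vee})$ are isomorphic as $H$-representations.

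Given any ample line bundle $L_0$ on $B$, I would set
\[
L:=\bigotimes_{h\in H}\phi_h^*L_0,
\]
where $\phi_h\in\Aut(B)$ denotes the automorphism corresponding to $h\in H$. As a tensor product of ample line bundles, $L$ is ample, and by construction $\phi_g^*L=L$ in $\Pic(B)$ for every $g\in H$. Using the standard identity $\phi_{\alpha^*M}=\alpha^{\vee}\circ\phi_M\circ\alpha$ relating polarizations to pulled-back line bundles, the $H$-invariance of $L$ translates into $\phi_L=\phi_g^{\vee}\circ\phi_L\circ\phi_g$ for every $g\in H$; equivalently, $\phi_L:B\to B^{\vee}$ is $H$-equivariant when $B^{\vee}$ is endowed with the contragredient left action $g\mapsto\phi_{g^{-1}}^{\vee}$ inherited from the contravariant functoriality of the dual abelian variety. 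Since $\Char F=0$, the isogeny $\phi_L$ induces an $H$-equivariant isomorphism of Lie algebras $d\phi_L:\Lie(B)\iso\Lie(B^{\vee})$.

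Combined with the canonical $H$-equivariant identifications $H^0(B,\Omega^1_{B/F})\cong\Lie(B)^{\vee}$ (a translation-invariant $1$-form is determined by its value at the origin) and $H^1(B,\cO_B)\cong\Lie(B^{\vee})$ (since $B^{\vee}$ represents $\Pic^0(B)$), this yields
\[
H^0(B,\Omega^1_{B/F})\cong\Lie(B)^{\vee}\cong\Lie(B^{\vee})^{\vee}\cong H^1(B,\cO_B)^{\vee},
\]
as desired. The only delicate point is verifying that the canonical isomorphism $\Lie(B^{\vee})\cong H^1(B,\cO_B)$ intertwines the contragredient $H$-action on $B^{\vee}$ with the natural pullback action on cohomology, which is immediate from the functoriality of both identifications. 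The characteristic zero assumption is essential: in characteristic $p$ an isogeny of degree divisible by $p$ (such as Frobenius) annihilates the Lie algebra, and Lemma \ref{ablift}(i) exhibits an explicit failure of the conclusion in that setting.
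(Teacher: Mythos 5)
Your proof is correct and follows essentially the same route as the paper: average an ample line bundle over $H$ to get an $H$-equivariant polarization, which is separable (an isomorphism on tangent spaces) because $\Char F=0$, and then identify the two sides of the duality. The only cosmetic difference is that you phrase the last step via $\Lie(B)\to\Lie(B^{\vee})$ while the paper uses the induced pullback $H^0(B^{\vee},\Omega^1_{B^{\vee}/F})\to H^0(B,\Omega^1_{B/F})$; these are the same identification.
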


\begin{proof}
Choose an ample line bundle $L$ on $B$. Then $M:=\bigotimes\limits_{h\in H}h^*(L)$ is an ample line bundle inducing a $H$-equivariant polarization $\lambda_{M}:B\to B^{\vee}$ that is separable because $\mathrm{char}\, F=0$. The induced map $\lambda_{M}^*:H^0(B^{\vee},\Omega^1_{B^{\vee}/F})\to H^0(B,\Omega^1_{B/F})$ provides a $H$-equivariant isomorphism between $H^0(B,\Omega^1_{B/F})$ and $H^1(B,\cO)^{\vee}\simeq H^0(B^{\vee},\Omega^1_{B^{\vee}/F})$
\end{proof}

The abelian schemes constructed in Lemma \ref{ablift} do not yet provide the desired $\fZ$ because for all $i,j$ the module $H^i(\fA,\Omega^j_{\fA/W(\kappa)})$ is the $\tau$-twist of $H^j(\fA,\Omega^i_{\fA/W(\kappa)})$ and, in particular, these modules have equal ranks of invariants. We will break this symmetry by taking the direct product with an appropriate algebraic abelian scheme with complex multiplication by $\bQ(\mu_l)$.

Recall that $k$ is an extension of $\bF_p$ containing all $l$-th roots of unity, so any representation of $G$ on a finite free module over $W(k)$ is isomorphic to a direct sum of characters $\chi_{\zeta}$ given by sending $\sigma$ to a degree $l$ root of unity $\zeta\in \mu_l\subset W(k)^{\times}$. 

\begin{df}We call a representation $U$ of $G$ on a finite free $W(k)$-module {\it typical} if it has the form $\bigoplus\limits_{i=1}^d\bigoplus\limits_{\zeta\in S_i}\chi_{\zeta}$ where each $S_i$ is a subset of $\mu_l$ of cardinality $\frac{l-1}{2}$ such that $S_i\cap S_i^{-1}=\varnothing$ and $d$ is some number.
\end{df} 

If $\fB$ is an algebraic abelian scheme over $W(k)$ of dimension $\frac{l-1}{2}$ with a non-trivial action of $G$ then the representation $H^0(\fB,\Omega^1_{\fB/W(k)})$ is typical: for the base change of $\fB$ to $\bC$ singular cohomology $H^1(\fB_{\bC},\bZ)$ is a free abelian group of rank $l-1$ with a non-trivial action of $G$. Hence, all non-trivial characters appear in $H^1(\fB_{\bC},\bZ)$ exactly once and by Lemma \ref{char0dual} these characters have to be distributed between $H^0(\fB_{\bC},\Omega^1_{\fB_{\bC}/\bC})=H^0(\fB,\Omega^1_{\fB/W(k)})\otimes_{W(k)}\bC$ and $H^1(\fB_{\bC},\cO)$ in such a way that $\chi_{\zeta}$ and $\chi_{\zeta^{-1}}$ never occur in the same piece of the Hodge decomposition. The next lemma deduces from the theory of complex multiplication that every typical representation arises from an abelian scheme over the ring of integers in an unramified extension of $\bQ_p$:

\begin{lm}\label{geomtypical}
For any {\it typical} representation of $G$ on a finite free $W(k)$-module $U$ there exists an abelian scheme $\fB$ with an action of $G$ over the ring of integers $\cO_L=W(k')$ of a finite unframified extension $L=W(k')[\ip]\supset W(k)[\ip]$ such that $H^0(\fB,\Omega^1_{\fB/\cO_L})\simeq U\otimes_{W(k)}{\cO_L}$ and $H^1(\fB,\cO)\simeq U^{\vee}\otimes_{W(k)}\cO_L$ as $G$-representations.
\end{lm}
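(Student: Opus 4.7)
By additivity in direct sums---if $\fB_i$ realizes $U_i = \bigoplus_{\zeta \in S_i} \chi_\zeta$ over $\cO_{L_i}$, then the product $\prod_i \fB_i$ base-changed to a common unramified extension realizes $\bigoplus_i U_i$---it suffices to construct $\fB$ in the case $U = \bigoplus_{\zeta \in S} \chi_\zeta$ for a single subset $S \subset \mu_l$ with $|S| = (l-1)/2$ and $S \cap S^{-1} = \varnothing$. Fix compatible embeddings $\obQ \hookrightarrow \bC$ and $\obQ \hookrightarrow \obQ_p$ (with the latter extending $W(k)[\ip] \hookrightarrow \obQ_p$) that identify the copies of $\mu_l$ in $\bC$, $\obQ$ and $W(k)$. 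Then $S$ determines a CM type $\Phi_S = \{\phi_\zeta : E \to \bC\}_{\zeta \in S}$ on $E = \bQ(\mu_l)$, with $\phi_\zeta(\zeta_l) = \zeta$; the assumption on $S$ is precisely that $\Phi_S \sqcup c\Phi_S = \Hom(E, \bC)$.

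The complex torus $B_{\bC} := \bC^S / \Phi_S(\cO_E)$ is a polarizable abelian variety with $\cO_E$-action of CM type $\Phi_S$, whose cotangent space $\bigoplus_{\zeta \in S} \bC \cdot dz_\zeta$ has $\zeta_l \in \cO_E$ acting on $dz_\zeta$ as multiplication by $\zeta$. By the main theorem of complex multiplication (Shimura-Taniyama), $(B_{\bC}, \cO_E)$ descends to $\obQ$, and via the chosen embedding we view it as an abelian variety $B$ over a finite extension of $\bQ_p$. Because $l \neq p$, the CM field $E$ is unramified at $p$, so $\cO_E \otimes_{\bZ} \bZ_p \cong \prod_{\fq | p} \cO_{E_\fq}$ is an \'etale $\bZ_p$-algebra and $B[p^\infty]$ is a CM $p$-divisible group for this algebra. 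By Serre-Tate deformation theory for CM $p$-divisible groups over unramified bases (cf.\ Example 4.1.2 in \cite{cco}), $B$ has good reduction over the ring of integers of some finite unramified extension of $\bQ_p$; enlarging this extension so as to contain $W(k)[\ip]$ gives $\cO_L = W(k')$ for some finite $k'/k$, together with the required abelian scheme $\fB/\cO_L$ and its $G$-action via $\sigma \mapsto \zeta_l$.

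To identify the Hodge cohomology, flat base change to $\bC$ gives $H^0(\fB, \Omega^1_{\fB/\cO_L}) \otimes_{\cO_L} \bC \simeq H^0(B_{\bC}, \Omega^1) = \bigoplus_{\zeta \in S} \bC \cdot dz_\zeta$, on which $\sigma = \zeta_l$ acts with eigenvalues $S$. Since $G$ has order $l$ prime to $p$, a representation of $G$ on a finite free $\cO_L$-module is determined by its base change to an extension field, so $H^0(\fB, \Omega^1_{\fB/\cO_L}) \simeq U \otimes_{W(k)} \cO_L$ as $G$-representations. The same argument applied to $H^1(\fB, \cO) \otimes_{\cO_L} \bC \simeq H^1(B_{\bC}, \cO) = \overline{H^0(B_{\bC}, \Omega^1)}$, on which $\zeta_l$ acts with eigenvalues $S^{-1}$, yields $H^1(\fB, \cO) \simeq U^\vee \otimes_{W(k)} \cO_L$. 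The main obstacle is justifying good reduction over an unramified extension of $\bQ_p$; this relies crucially on $p$ being unramified in $E$ and reduces to the existence of canonical lifts of CM $p$-divisible groups with action by an \'etale $\bZ_p$-algebra.
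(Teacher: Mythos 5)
Your overall strategy is the same as the paper's: reduce to a single typical block $S$, convert $S$ into a CM type $\Phi_S$ on $\bQ(\mu_l)$, produce a CM abelian variety of that type with good reduction over an \emph{unramified} extension of $\bQ_p$, and read off the Hodge cohomology from the CM type. The differences are in how the existence statement is obtained. The paper cites \cite{cco} Corollary A.4.6.5 directly, which hands it an abelian variety of type $\Phi$ over an unramified extension of the reflex field with good reduction above $p$, and then needs a Serre tensor product step to upgrade the isogeny action of $\bQ(\mu_l)$ to a genuine $\bZ[\mu_l]$-action; you instead build $B_{\bC}=\bC^S/\Phi_S(\cO_E)$ with its genuine $\cO_E$-action, descend to a number field, and argue good reduction over an unramified extension via canonical lifts of CM $p$-divisible groups. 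Your route avoids the Serre tensor step (the $\cO_E$-action extends to the good model by the N\'eron property, which you should say), but it relocates all the difficulty into the sentence ``$B$ has good reduction over the ring of integers of some finite unramified extension of $\bQ_p$.'' That is exactly the nontrivial content of \cite{cco} A.4.6: knowing that canonical lifts of CM $p$-divisible groups with \'etale CM algebra exist over $W(k')$ shows that \emph{some} CM abelian scheme with a prescribed Lie type exists there, but concluding that your particular $B$ (descended from $\bC$ via chosen embeddings) attains good reduction after an \emph{unramified} base change, with generic-fibre CM type equal to $\Phi_S$, requires the Shimura--Taniyama/reflex-field bookkeeping that you only gesture at. As written this step is a correct slogan rather than a proof; either carry it out or cite the precise statement, as the paper does.

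One further point to watch: you take the $G$-action on $H^0(\Omega^1)$ to be $\sigma\mapsto[\zeta_l]^*$, so $\sigma$ acts with eigenvalues $S$; the paper's bookkeeping (its $\Phi=\{\varphi_\zeta\mid \chi_{\zeta^{-1}}\text{ appears in }U\}$) corresponds to the functorial left action $\sigma\mapsto([\sigma]^{-1})^*$, which inverts the eigenvalues. Since $G$ is abelian both are honest left actions, and since the class of typical representations is stable under $S\mapsto S^{-1}$ the lemma is insensitive to the choice, but you must use the same convention here as in Lemma \ref{ablift} and Proposition \ref{formabineq}, otherwise $U$ and $U^{\vee}$ get interchanged in the application.
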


\begin{proof}
It is enough to prove the lemma for typical representations of rank $\frac{l-1}{2}$ because we have $H^0(\fB_1\times_{\cO_L}\fB_2,\Omega^1_{\fB_1\times_{\cO_L}\fB_2/\cO_L})\simeq H^0(\fB_1,\Omega^1_{\fB_1/\cO_L})\oplus H^0(\fB_2,\Omega^1_{\fB_2/\cO_L})$ and likewise $H^1(\fB_1\times_{\cO_L}\fB_2,\cO)\simeq H^1(\fB_1,\cO)\oplus H^1(\fB_2,\cO)$ for all abelian schemes $\fB_1,\fB_2$, so the general case can be obtained by taking the products.

Fix a non-trivial root of unity $\zeta_l\in \bQ(\mu_l)$ and an embedding $\bQ(\mu_l)\subset W(k)[\ip]$, giving a bijection between roots of unity in $\bQ(\mu_l)$ and $W(k)$. Each non-trivial root of unity $\zeta\in\mu_l(W(k))$ thus defines an embedding of the cyclotomic field $\varphi_{\zeta}:\bQ(\mu_l)\to \overline{\bQ}$ into the algebraic closure given by $\zeta_l\mapsto \zeta$. 

Let $U$ be a typical representation of rank $\frac{l-1}{2}$ over $W(k)$. We associate to $U$ the set $\Phi=\{\varphi_{\zeta}|\chi_{\zeta^{-1}}\text{ appears in }U\}$ of embeddings. By the definition of typical representation $\Phi$ is a CM type of the field $\bQ(\mu_l)$. Since $\bQ(\mu_l)$ is Galois over $\bQ$, the reflex field of $\Phi$ is contained in $\bQ(\mu_l)$. By \cite{cco} Corollary A.4.6.5 applied with $N=p$ (beware that the symbol "$p$" bears its own meaning in this corollary) there exists an abelian variety $B'$ with complex multiplication by $\bQ(\mu_l)$ of type $\Phi$ defined over an unramified extension $F$ of the reflex field that has good reduction at all primes above $p$. Take $L=F_{\fp}$ for some prime $\fp\subset\cO_F$ above $p$ and consider the good model $\fB'$ of $B'$ over $\cO_L$. The isogeny action of $\bQ(\mu_l)$ then extends onto $\fB'$. In particular, embedding $G$ into $\bQ(\mu_l)$ via $\sigma\mapsto\zeta_l$ gives an action of $G$ on $\fB'$ in the isogeny category. By our choice of $\Phi$ the $G$-representation $H^0(\fB',\Omega^1_{\fB'/\cO_L})\otimes_{\cO_L}L$ is isomorphic to $U\otimes_{W(k)}L$.

Analogously to the proof of Lemma \ref{modpabvar}, Serre's tensor product construction gives an isogeny $\fB'\to \fB$ to an abelian scheme $\fB$ that carries a genuine action of $\bZ[\mu_l]$. In particular, $G$ acts on $\fB$ such that $H^0(\fB,\Omega^1_{\fB/\cO_L})$ is isomorphic to $U\otimes_{W(k)}\cO_L$, as desired. To see that $H^1(\fB,\cO)$ is isomorphic to $U^{\vee}\otimes_{W(k)}\cO_L$ it is enough to check the analogous statement for the generic fiber $\fB_L$ and this is given by Lemma \ref{char0dual}.
\end{proof}

\begin{rem}
We have appealed to stronger results of \cite{cco} to make sure that there exists $\fB$ defined over the ring of integers in an {\it unramified} extension of $\bQ_p$. If we did not care about the field of definition of the ultimate counterexamples to Hodge symmetry we could have applied a cruder form of CM theory given e.g. by \cite{t} Lemme 4.
\end{rem}

\begin{lm}\label{choosetypical}
Let $V$ be a representation of $G$ on a finite free $W(k)$-module such that $V$ is isomorphic to its dual $V^{\vee}$ but is not isomorphic to the twist $V^{\tau}$. Then there exists a typical representation $U$ such that \begin{equation}\label{choosetypical:noneq}\rk \Lambda^3(V\oplus U)^G\neq \rk\Lambda^3(V^{\tau}\oplus U^{\vee})^G\end{equation}
\end{lm}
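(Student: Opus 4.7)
The plan is to expand the difference in (\ref{choosetypical:noneq}) as a quadratic form in a parameter $\epsilon \in \{\pm 1\}^{(l-1)/2}$ parametrizing typical $U$ with $d=1$, and to show this form is not identically zero.

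First, the identity $\Lambda^3(V \oplus U) \cong \Lambda^3 V \oplus (\Lambda^2 V \otimes U) \oplus (V \otimes \Lambda^2 U) \oplus \Lambda^3 U$ and its analogue for $\Lambda^3(V^\tau \oplus U^\vee)$ split (\ref{choosetypical:noneq}) into four pieces. Two of them---$\Lambda^3 V$ versus $\Lambda^3 V^\tau$, and $\Lambda^3 U$ versus $\Lambda^3 U^\vee$---contribute nothing, since the trivial character is fixed both by the $\tau$-twist and by dualization. Writing $v_\omega := \rk V_{\chi_\omega}$, $u_\omega := \rk U_{\chi_\omega}$, $a_\omega := \rk (\Lambda^2 V)_{\chi_\omega}$, the hypothesis $V \cong V^\vee$ yields $a_\omega = a_{\omega^{-1}}$, while typicality of $U$ forces $u_\omega + u_{\omega^{-1}} = d$ and hence antisymmetry of $u_\omega - u_{\omega^p}$ under $\omega \leftrightarrow \omega^{-1}$; pairing symmetric against antisymmetric, the $\Lambda^2 V \otimes U$ contribution also cancels. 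What remains is
$$\Delta = \sum_\omega \tilde v_\omega \, b_\omega, \qquad \tilde v_\omega := v_\omega - v_{\omega^{1/p}}, \qquad b_\omega := \rk (\Lambda^2 U)_{\chi_\omega},$$
with $\tilde v_\omega = \tilde v_{\omega^{-1}}$ and $\tilde v \neq 0$ (the latter being equivalent to $V \not\cong V^\tau$).

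Next I would specialize to $U = \bigoplus_{\omega \in S}\chi_\omega$ (so $d=1$) for a half $S \subset \mu_l \setminus \{1\}$; index the $m := (l-1)/2$ pairs $\{\omega_j, \omega_j^{-1}\}$ and encode $S$ by $\epsilon \in \{\pm 1\}^m$, where $\omega_j^{\epsilon_j}$ is the element of $S$ in pair $j$. Writing $U_S = \tfrac{1}{2}(Y + X(\epsilon))$ in $\bZ[\mu_l]$ with $Y := \sum_{\omega \neq 1}[\omega]$ and $X(\epsilon) := \sum_j \epsilon_j([\omega_j] - [\omega_j^{-1}])$, applying the identity $\Lambda^2 U_S = \tfrac{1}{2}(U_S^2 - U_S^{[2]})$ (with $U_S^{[2]} := \sum_{\omega \in S}[\omega^2]$ the $\psi^2$-Adams operation), and using that $\tilde v$ is symmetric with $\tilde v_1 = 0$ and $\sum_\omega \tilde v_\omega = 0$, one checks that the pairings of $\tilde v$ against $Y^2$, $YX$, and $U_S^{[2]}$ all vanish. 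This leaves
$$\Delta(\epsilon) = \frac{1}{4}\sum_{j<k}c_{jk}\,\epsilon_j\epsilon_k, \qquad c_{jk} := 2\bigl(\tilde v_{\omega_j \omega_k} - \tilde v_{\omega_j\omega_k^{-1}}\bigr).$$
Since the monomials $\prod_{j \in T}\epsilon_j$ ($T \subseteq \{1,\dots,m\}$) form a basis of $\bQ$-valued functions on $\{\pm 1\}^m$, the statement reduces to producing some pair $j \neq k$ with $c_{jk} \neq 0$.

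The main obstacle, and the only place the arithmetic hypothesis on $l$ really enters, is this last step. I would argue by contradiction: if $c_{jk} = 0$ for all $j \neq k$, then for any $\mu, \eta \in \mu_l \setminus \{1\}$ with $\mu \neq \eta^{\pm 1}$, taking $\omega, \omega' \in \mu_l$ to be the unique elements with $\omega^2 = \mu\eta$ and $\omega'^2 = \mu/\eta$ (existence and uniqueness use that $l$ is odd) yields non-trivial elements in distinct pairs satisfying $\omega\omega' = \mu$ and $\omega\omega'^{-1} = \eta$, so $\tilde v_\mu = \tilde v_\eta$. The graph on $\mu_l \setminus \{1\}$ with edges between $\mu, \eta$ whenever $\mu \neq \eta^{\pm 1}$ is connected for $l \geq 5$ (which is forced by (\ref{lcondition})), so $\tilde v$ is constant on $\mu_l \setminus \{1\}$; combined with $\tilde v_1 = 0$ and $\sum_\omega \tilde v_\omega = 0$, this forces $\tilde v = 0$, i.e., $V \cong V^\tau$---a contradiction.
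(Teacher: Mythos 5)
Your proposal is correct, and while it shares the paper's overall strategy (argue by contradiction, expand $\Lambda^3(V\oplus U)$ into four summands, and exploit the abundance of typical $U$), the two key steps are carried out quite differently. To isolate the relevant cross term, the paper replaces $U$ by $U^{\oplus r}$, observes that both sides are quadratic polynomials in $r$, and compares leading coefficients to extract $\rk(V\otimes U^{\otimes 2})^G=\rk(V^{\tau}\otimes U^{\vee\otimes 2})^G$; you instead show by direct symmetric-versus-antisymmetric pairing arguments that the $\Lambda^3V$, $\Lambda^3U$ and $\Lambda^2V\otimes U$ contributions cancel identically, leaving only $\sum_\omega\tilde v_\omega b_\omega$. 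For the endgame, the paper shows that the span of typical characters contains all $\chi_\zeta-\chi_{\zeta^{-1}}$ and pairs $\chi_V-\chi_{V^\tau}$ against $(\chi_\zeta-\chi_{\zeta^{-1}})^2=\chi_\zeta^2+\chi_{\zeta^{-1}}^2-2\chi_1$; you instead expand $\Delta$ as a multilinear quadratic form in the sign vector $\epsilon$, invoke linear independence of the monomials $\prod_{j\in T}\epsilon_j$ to force all $c_{jk}=0$, and then run a connectivity argument on the graph whose edges are pairs $\mu\neq\eta^{\pm1}$ (using that squaring is a bijection on $\mu_l$ and that $l\geq 5$). Your version is more explicit and computational, and has the advantage of only ever using typical $U$ with $d=1$; the paper's version avoids the sign parametrization and the combinatorics at the cost of the polynomial-in-$r$ trick. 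One small point worth making explicit in your write-up: the diagonal terms $j=k$ of $X(\epsilon)^2$ contribute a constant $\tfrac{1}{8}\sum_j\bigl(\tilde v_{\omega_j^2}+\tilde v_{\omega_j^{-2}}-2\tilde v_1\bigr)$ to $\Delta(\epsilon)$, which is not covered by your three listed vanishing pairings; it does vanish (since squaring permutes the inverse-pairs and $\sum_\omega\tilde v_\omega=\tilde v_1=0$), and even if it did not, the vanishing of the $c_{jk}$ would still follow, so this is an omission of a verification rather than a gap.
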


\begin{proof}
Suppose on the contrary that these ranks of invariant subspaces are equal for all typical $U$. Using that $\Lambda^3(V\oplus U)\simeq \Lambda^3 V\oplus\Lambda^2V\otimes U\oplus V\otimes\Lambda^2 U\oplus\Lambda^3 U$ and the fact that twisting by an automorphism of $G$ does not change the invariant submodule of a representation we get $$\rk (\Lambda^2V\otimes U^{\oplus r})^G +\rk (V\otimes\Lambda^2(U^{\oplus r}))^G = \rk(\Lambda^2 V^{\tau}\otimes U^{\vee\oplus r})^G+\rk(V^{\tau}\otimes\Lambda^2 (U^{\vee\oplus r}))^G$$ for all typical $U$ and any multiplicity $r$ (by definition, the class of typical representations is closed under direct sums). Since $\Lambda^2(U^{\oplus r})\simeq (\Lambda^2U)^{\oplus r}\oplus(U^{\otimes 2})^{\oplus\binom{r}{2}}$ both sides of the equality are quadratic polynomials in $r$ for a fixed $U$ and comparing the leading coefficients we get $$\rk(V\otimes U^{\otimes 2})^G=\rk(V^{\tau}\otimes U^{\vee\otimes 2})^G$$ As $V$ is assumed to be self-dual this equality is equivalent to $\rk(V\otimes U^{\otimes 2})^G=\rk(V^{\tau}\otimes U^{\otimes 2})^G$. In other words, the difference of characters $\chi_V-\chi_{V^{\tau}}$ is orthogonal to any character $\chi_{U^{\otimes 2}}$ for a typical $U$ with respect to the pairing $\langle\chi_1,\chi_2\rangle:=\sum\limits_{g\in G}\chi_1(g)\chi_2(g)$ on the character ring $W(k)[G]$. Moreover, this implies that $\chi_V-\chi_{V^{\tau}}$ is orthogonal to $\chi_{U_1\otimes U_2}=\chi_{U_1}\cdot \chi_{U_2}$ for any two typical representations $U_1,U_2$ because $2\chi_{U_1\otimes U_2}=\chi_{(U_1\oplus U_2)^{\otimes 2}}-\chi_{U_1^{\otimes 2}}-\chi_{U_2^{\otimes 2}}$.

If $\zeta$ is any non-trivial root of unity, then for a set $\zeta_1,\dots,\zeta_{\frac{l-3}{2}}$ of pairwise different roots such that $\zeta_i\neq \zeta_j^{-1}$ and $\zeta_i\neq\zeta^{\pm 1}$ for all $i,j$ both representations $\chi_{\zeta}\oplus\chi_{\zeta_1}\oplus\dots\oplus \chi_{\zeta_{\frac{l-3}{2}}}$ and $\chi_{\zeta^{-1}}\oplus\chi_{\zeta_1}\oplus\dots\oplus \chi_{\zeta_{\frac{l-3}{2}}}$  are typical. Hence the span of characters of typical representations in $W(k)[G]$ contains all characters of the form $\chi_{\zeta}-\chi_{\zeta^{-1}}$ for a non-trivial root of unity $\zeta$. By linearity of the pairing, $\chi_V-\chi_{V^{\tau}}$ is hence also orthogonal to $\chi^2_{\zeta}+\chi^2_{\zeta^{-1}}=(\chi_{\zeta}-\chi_{\zeta^{-1}})^2+2\chi_{1}$. However, $\langle \chi_V,\chi^2_{\zeta}+\chi^2_{\zeta^{-1}}\rangle=2\langle\chi_V,\chi^2_{\zeta}\rangle$ by self-duality of $V$ and likewise for $V^{\tau}$ so we get a contradiction since, by assumption, there exists $\zeta$ such that multiplicities of $\chi_{\zeta^{-2}}$ in $V$ and $V^{\tau}$ are different.
\end{proof}

\begin{proof}[Proof of Proposition \ref{formabineq}]
Let $\fA$ be a formal abelian scheme with an action of $G$ provided by Lemma \ref{ablift}(ii). Consider the representation $V=H^0(\fA,\Omega^1_{\fA/W(\kappa)})$. We have $G$-equivariant isomorphisms $V\oplus V^{\tau}\simeq H^0(\fA,\Omega^1_{\fA/W(\kappa)})\oplus H^1(\fA,\cO)\simeq H^1_{\dR}(\fA/W(\kappa))\simeq H^1_{\cris}(A/W(\kappa))$, so every non-trivial character $\chi_{\zeta}$ appears in exactly one of $V$ and $V^{\tau}$ with multiplicity one. It follows that $V\simeq (V^{\tau})^{\tau}$. Since $V^{\vee}$ is obtained from $V$ by applying the twist $\tau$ an even number of times, the representation $V\otimes_{W(\kappa)}W(k)$ satisfies the assumptions of Lemma \ref{choosetypical} and there exists a typical representation $U$ such that non-equality (\ref{choosetypical:noneq}) holds. Let $\fB$ be an abelian scheme over $\cO_L=W(k')$ provided by Lemma \ref{geomtypical} applied to $U$. The product $\fZ'=\cA\times_{W(k)}\fB$ equipped with the diagonal action then has $H^0(\fZ',\Omega^1_{\fZ/\cO_L})\simeq (V\oplus U)\otimes_{W(k)}{\cO_L}$ and $H^1(\fZ',\cO)\simeq (V^{\tau}\oplus U^{\vee})\otimes_{W(k)}{\cO_L}$ as $G$-representations.

Since $H^0(\fZ',\Omega^3_{\fZ/\cO_L})\simeq \Lambda^3 H^0(\fZ',\Omega^1_{\fZ'/\cO_L})$ and $H^3(\fZ',\cO)\simeq \Lambda^3 H^1(\fZ',\cO)$ we get $\rk H^0(\fZ',\Omega^3_{\fZ/\cO_L})^G\neq \rk H^3(\fZ',\cO)^G$. If $\rk H^0(\fZ',\Omega^3_{\fZ/\cO_L})^G> \rk H^3(\fZ',\cO)^G$ then $\fZ=\fZ'$ gives the desired inequality \ref{formalabineq:noneq}. If the inequality goes the other way, take $\fZ=\fZ'^{\vee}$ using that there are $G$-equivariant isomorphisms $H^0(\fZ'^{\vee},\Omega^1_{\cO_L})\simeq H^1(\fZ',\cO)^{\vee}$ and $H^1(\fZ'^{\vee},\cO)\simeq H^0(\fZ',\Omega^1_{\fZ'/\cO_L})^{\vee}$.
\end{proof}

\begin{rem}
(i) The same argument works just as well to provide an abelian scheme with different ranks of invariants $\rk H^i(\fZ,\Omega^j_{\fZ/W(k)})^G\neq \rk H^i(\fZ,\Omega^j_{\fZ/W(k)})^G$ for any $i\neq j, i+j\geq 3$. We chose to treat here only the case $(i,j)=(3,0)$ and obtain counterexamples to symmetry in higher degrees by taking products with auxiliary varieties because that simplifies the argument in the next section.

(ii) I am not aware of a simple way of choosing $\fZ$ that would work uniformly for all $p$. However, for any fixed $l$ everything can be made more explicit for primes $p$ for which $l$ satisfied condition (\ref{lcondition}). For instance, for $p$ congruent to $2$ or $3$ modulo $5$ we can take $l=5$ so that $\dim\fA=2$ and $\fB$ can also be chosen to be $2$-dimensional. In this case (for any of the particular choices of $\fA$ and $\fB$ that give non-symmetric dimensions of invariants) $H^3_{\dR}(A\times B/L)^G$ is a filtered $\varphi$-module with Hodge numbers $(h^{3,0},h^{2,1},h^{1,2},h^{0,3})=(0,5,2,1)$ or $(1,2,5,0)$ and single slope $\frac{3}{2}$ (the slopes can be computed by the Shimura-Taniyama formula). Note that such Hodge numbers satisfy the relation (\ref{deg 2: main})\end{rem}

\section{Main example}

\begin{thm}\label{main}
For every pair of distinct positive integers $i\neq j$ with $i+j\geq 3$ there exists a smooth proper formal scheme $\fX$ over $\bZ_p$ with projective special fiber $\fX_{\bF_p}$ such that the Hodge cohomology groups $H^i(\fX,\Omega^j_{\fX/\bZ_p})$ and $H^j(\fX,\Omega^i_{\fX/\bZ_p})$ are free $\bZ_p$-modules of different rank.
\end{thm}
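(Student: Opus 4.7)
My plan is to build $\fX$ in three stages: first produce an example with Hodge asymmetry in degree $3$ over a finite unramified extension of $\bZ_p$, then descend to $\bZ_p$ via Weil restriction, and finally take a product with an auxiliary smooth projective variety to move the asymmetry into the prescribed bidegree $(i,j)$.

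\emph{Stage 1.} Take the formal abelian scheme $\fZ$ from Proposition \ref{formabineq}, defined over $W(k')$. Fix an elliptic curve $E$ over a finite unramified extension $W(k'')$ of $W(k')$ with a $W(k'')$-rational $l$-torsion point $P$, and let $G = \bZ/l$ act on $E$ by translation by $P$. Translations on an abelian scheme act trivially on Hodge cohomology, so this action is free on $E$ and trivial on $H^*(E,\Omega^*)$. Form
\[ \fW := (\fZ_{W(k'')} \times E)/G, \]
a smooth proper formal scheme over $W(k'')$ whose special fiber is projective (the quotient of a product of projective varieties by a free action of a group of order prime to $p$). Setting $N_{p,q} := \dim H^p(\fZ,\Omega^q)^G$, Künneth combined with exactness of $G$-invariants yields
\[ h^{3,0}(\fW) - h^{0,3}(\fW) = (N_{3,0} - N_{0,3}) + (N_{2,0} - N_{0,2}). \]
Now apply Corollary \ref{cordeg2} to $\fW$ itself: its projective special fiber forces $h^{2,0}(\fW) = h^{0,2}(\fW)$. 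Since $H^0(\fZ,\Omega^1)$ and $H^1(\fZ,\cO)$ contain no trivial $G$-character (by the construction of $\fZ$ in Proposition \ref{formabineq}), $N_{1,0} = N_{0,1} = 0$ and Künneth reduces the degree-$2$ symmetry on $\fW$ to $N_{2,0} = N_{0,2}$. Hence the second discrepancy vanishes while the first is nonzero by Proposition \ref{formabineq}, so $h^{3,0}(\fW) \neq h^{0,3}(\fW)$.

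\emph{Stage 2.} Set $\fX_0 := R_{W(k'')/\bZ_p}\fW$. Weil restriction along a finite étale base preserves smoothness, properness, and projectivity of special fibers, so $\fX_0$ is a smooth proper formal scheme over $\bZ_p$ with projective special fiber. Flat base change gives $\fX_0 \otimes_{\bZ_p} W(k'') \cong \prod_\sigma \fW^\sigma$, and Künneth shows that the Hodge polynomial $P_{\fX_0}(x,y) := \sum h^{i,j}(\fX_0) x^i y^j$ equals $P_\fW(x,y)^r$ where $r = [k'':\bF_p]$. Since $P_\fW(x,y) - P_\fW(y,x)$ vanishes in degrees $\leq 2$ (by Corollary \ref{cordeg2}) and has nonzero degree-$3$ part (by Stage 1), the same holds for $P_\fW(x,y)^r - P_\fW(y,x)^r$, its degree-$3$ part being $r$ times the degree-$3$ part of $P_\fW(x,y) - P_\fW(y,x)$. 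Hence $\fX_0$ has nonzero degree-$3$ Hodge asymmetry, which by Proposition \ref{deg2} appears in both the $(3,0)$ and $(1,2)$ pairs.

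\emph{Stage 3.} For a target bidegree $(i,j)$ with $i \neq j$ and $i + j \geq 3$, take $\fX := \fX_0 \times_{\bZ_p} Y$ for a smooth projective $Y/\bZ_p$ with Hodge symmetry. Then
\[ P_\fX(x,y) - P_\fX(y,x) = \bigl(P_{\fX_0}(x,y) - P_{\fX_0}(y,x)\bigr) \cdot P_Y(x,y), \]
and the leading (degree-$3$) part of $P_{\fX_0}(x,y) - P_{\fX_0}(y,x)$ equals $\alpha(x-y)^3$ with $\alpha = h^{3,0}(\fX_0) - h^{0,3}(\fX_0) \neq 0$ (using Proposition \ref{deg2}). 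It then suffices to choose $Y$ from a toolbox (products of projective spaces, elliptic curves, abelian varieties) so that the $(x^iy^j)$-coefficient of $(x-y)^3 \cdot P_Y(x,y)$ is nonzero and dominates higher-degree contributions; this is a per-$(i,j)$ combinatorial verification, which is easy since we need only existence of one $Y$ per pair. The main obstacle is Stage 1, where the specific interplay of Corollary \ref{cordeg2} with the absence of trivial $G$-characters in $H^1(\fZ,\cO) \oplus H^0(\fZ,\Omega^1)$ makes the spurious $(2,0)$-vs-$(0,2)$ discrepancy cancel and isolates the genuine asymmetry from Proposition \ref{formabineq}.
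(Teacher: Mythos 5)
Your three-stage architecture (degree-$3$ asymmetry over an unramified extension, Weil restriction, then products to reach bidegree $(i,j)$) is the same as the paper's, and Stages 1 and 2 are correct. Stage 1 is a genuinely different way to make the $G$-action free: the paper multiplies $\fZ$ by a complete intersection of dimension $\geq 4$ carrying a free $G$-action and uses the Lefschetz hyperplane theorem so that the auxiliary factor contributes nothing to Hodge cohomology in degrees $\leq 3$, whereas your elliptic curve with translation by an $l$-torsion point (free on the special fiber since $l\neq p$ and reduction is injective on $l$-torsion) does contribute extra K\"unneth terms. You cancel the spurious degree-$2$ term correctly: the trivial character indeed does not occur in $H^0(\fZ,\Omega^1)\oplus H^1(\fZ,\cO)$ (for $V$ this is because only nontrivial characters appear, and a typical $U$ excludes the trivial character since $1\in S_i$ would violate $S_i\cap S_i^{-1}=\varnothing$), so $h^{2,0}(\fW)=N_{2,0}$ and $h^{0,2}(\fW)=N_{0,2}$, and Corollary \ref{cordeg2} applied to $\fW$ forces these to agree. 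Your identification of the degree-$3$ antisymmetric part of the Hodge polynomial as $\alpha(x-y)^3$ via Proposition \ref{deg2} is also correct. This trade (a $1$-dimensional auxiliary variety at the cost of an extra appeal to $p$-adic Hodge theory) is a legitimate alternative to the paper's Lefschetz argument.

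The genuine gap is Stage 3, which you reduce to ``a per-$(i,j)$ combinatorial verification'' without performing it; this is precisely the content of the paper's Lemma \ref{prod} and it is not automatic. The obstruction you gesture at with ``dominates higher-degree contributions'' is real: in
$\delta^{i,j}(\fX_0\times Y)=\sum_{i_1+i_2=i,\,j_1+j_2=j}\delta^{i_1,j_1}(\fX_0)\,h^{i_2,j_2}(Y)$
the terms with $i_1+j_1\geq 4$ involve discrepancies $\delta^{i_1,j_1}(\fX_0)$ over which you have no control, and they cannot simply be made to vanish --- for instance $h^{0,0}(Y)=1$ always contributes the unknown quantity $\delta^{i,j}(\fX_0)$. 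So one cannot choose $Y$ with support only in total degree $i+j-3$; instead one must make the controlled degree-$3$ contribution unbounded in a parameter while freezing everything else. The paper does this by building $Y$ as a tower of blow-ups of projective spaces along a hypersurface $\fT_{d,n}$, arranging that all Hodge numbers of $Y$ in the relevant range except $h^{n,0}(\fT_{d,n})=\binom{d-1}{n+1}$ are independent of $d$, and separately treating $i-j\in\{1,3\}$ via $(\bP^1)^d$, where $\delta^{i,j}(\fX_0\times(\bP^1)^d)$ is a polynomial in $d$ whose leading coefficient is $\pm\delta^{3,0}$ or $\pm\delta^{2,1}$. Some argument of this kind is needed for every pair $(i,j)$, and your proposal as written does not supply it.
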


We first record two standard general facts from formal geometry. The main reference on rigid-analytic generic fibers of formal schemes is \cite{bl}. Let $K$ be a discretely valued field of characteristic zero with ring of integers $\cO_K$, maximal ideal $\fm\subset\cO_K$ and perfect residue field $k=\cO_K/\fm$ of characteristic $p$. 

\begin{lm}\label{gencoh}
If $\fX$ is a smooth proper formal scheme over $\Spf\cO_K$ with generic fiber $X$, then there is a canonical isomophism $H^i(X,\Omega^j_{X/K})\simeq H^i(\fX,\Omega^j_{\fX/\cO_K})[\ip]$. In particular, $h^{i,j}(X)=h^{i,j}(\fX)$.
\end{lm}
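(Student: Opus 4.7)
The plan is to reduce the assertion to a standard comparison between formal coherent cohomology and rigid-analytic coherent cohomology after inverting $p$, applied to the coherent sheaf $\Omega^j_{\fX/\cO_K}$. First I would note that because $\fX$ is smooth over $\Spf\cO_K$, the sheaf $\Omega^j_{\fX/\cO_K}$ is locally free of finite rank, and that its rigid-analytic generic fibre (the coherent sheaf associated to it under the functor of \cite{bl}) is canonically identified with $\Omega^j_{X/K}$; this is essentially formal from the definition of differentials on both sides, since on an affine formal open $\Spf A\subset \fX$ with generic fibre $\Sp(A[\ip])$ one has $\Omega^1_{A[\ip]/K}\simeq \Omega^1_{A/\cO_K}\otimes_{\cO_K}K$, and forming exterior powers commutes with localization.

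Next I would cover $\fX$ by finitely many affine formal opens $\fU_\alpha=\Spf A_\alpha$; since $\fX$ is separated (being smooth and proper), finite intersections of the $\fU_\alpha$ are again affine formal schemes, and the induced cover of the rigid generic fibre $X$ is a finite cover by affinoids whose intersections are also affinoids. For coherent sheaves, cohomology of affines in formal geometry and of affinoids in rigid geometry both vanish in positive degrees, so Čech cohomology for these covers computes the derived functor cohomology on each side. Concretely, $H^i(\fX,\Omega^j_{\fX/\cO_K})$ is the $i$-th cohomology of the Čech complex $C^{\bullet}$ whose terms are finite direct sums of modules of the form $\Omega^j_{A/\cO_K}$ for $A$ one of the intersection algebras, while $H^i(X,\Omega^j_{X/K})$ is the $i$-th cohomology of the complex $C^{\bullet}\otimes_{\cO_K}K$.

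Since localization at $p$ is exact, it commutes with taking cohomology of the complex $C^{\bullet}$, giving the desired canonical isomorphism $H^i(X,\Omega^j_{X/K})\simeq H^i(\fX,\Omega^j_{\fX/\cO_K})[\ip]$. The equality of Hodge numbers then follows: by properness of $\fX$ (and the formal proper mapping theorem, cf.\ EGA III), each $H^i(\fX,\Omega^j_{\fX/\cO_K})$ is a finitely generated $\cO_K$-module, so its rank over the DVR $\cO_K$ agrees with the $K$-dimension of its localization, i.e.\ $h^{i,j}(\fX)=h^{i,j}(X)$. The only mildly delicate point is the verification that intersections in the chosen affine cover stay affine and that Čech cohomology computes derived functor cohomology in the formal setting; both are standard but worth flagging, and after that the remainder of the argument is essentially bookkeeping.
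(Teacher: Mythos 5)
Your argument is correct and is essentially identical to the paper's: both compute the two cohomologies via the \v{C}ech complex of a finite affine formal cover and its induced affinoid cover of $X$, observe that the rigid complex is the formal one with $p$ inverted, and use exactness of localization. The extra remarks you include (identification of $\Omega^j$ on generic fibers, finiteness of $H^i(\fX,\Omega^j_{\fX/\cO_K})$ for the rank statement) are fine and only flesh out what the paper leaves implicit.
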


\begin{proof}
Let $(\fU_i)_{i\in I}$ be a finite cover of $\fX$ by affine open formal subschemes. Their generic fibers $U_i$ induce a covering of $X$ by affinoid subdomains. For any coherent sheaf $\cF$ on $\fX$ its cohomology is computed by the \v{C}ech complex $\bigoplus_{i\in I}\cF(\fU_i)\to \bigoplus_{i,j\in I}\cF(\fU_i\cap\fU_j)\to\dots$. The cohomology of the sheaf $F$ on $X$ associated to $\cF$ is computed by the \v{C}ech complex $\bigoplus_{i\in I}F(U_i)\to \bigoplus_{i,j\in I}F(U_i\cap U_j)\to\dots$ since higher cohomology of coherent sheaves on affinoids vanish. By definition, $F(U_i)=\cF(\fU_i)[\ip]$ so the \v{C}ech complex on the generic fiber is obtained from that of the formal scheme by inverting $p$ and we get the isomorphism $H^i(X,F)\simeq H^i(\fX,\cF)$ which implies the statement by using $\cF=\Omega^j_{\fX/\cO_K}$. 
\end{proof}

\begin{lm}\label{quot}
Let $\fY$ be a smooth proper formal scheme over $\cO_K$ with an action of a finite group $\Gamma$ of order prime to $p$. If the special fiber $\fY_k$ is projective over $k$ and the action of $\Gamma$ on it is free there exists a quotient smooth proper formal scheme $\fX$ over $\cO_K$ with projective special fiber such that $$H^i(\fX,\Omega^j_{\fX/\cO_K})\simeq H^i(\fY,\Omega^j_{\fY/\cO_K})^{\Gamma}$$
\end{lm}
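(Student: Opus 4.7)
The plan is to construct $\fX = \fY/\Gamma$ by glueing local invariant rings, verify that it is a smooth proper formal scheme with projective special fiber, and then deduce the cohomology formula from the fact that $\pi:\fY\to\fX$ is a finite \'etale $\Gamma$-torsor combined with the exactness of $\Gamma$-invariants when $|\Gamma|$ is a unit.

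First I would produce a $\Gamma$-stable affine open cover of $\fY$. Every $\Gamma$-orbit in the projective special fiber $\fY_k$ is contained in some affine open $V$, since any finite set of points on a projective $k$-scheme has a common affine neighborhood. The intersection $V':=\bigcap_{g\in\Gamma}g(V)$ is then $\Gamma$-stable and affine (a finite intersection of affine opens in a separated scheme). By the standard lifting of open subschemes between a formal scheme and its reduction, $V'$ extends uniquely to a $\Gamma$-stable affine open formal subscheme of $\fY$ of the form $\Spf A_\alpha$. Doing this for every orbit yields an affine cover $\{\Spf A_\alpha\}$ of $\fY$ by $\Gamma$-stable pieces, and I would define $\fX$ by glueing the formal spectra $\Spf A_\alpha^{\Gamma}$ along the evident descent data.

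Next I would check geometric properties of $\fX$ and of the quotient map $\pi:\fY\to\fX$. Since $|\Gamma|\in\cO_K^{\times}$, averaging makes $A_\alpha^{\Gamma}\hookrightarrow A_\alpha$ a split monomorphism, so each $A_\alpha$ is finite and faithfully flat over $A_\alpha^{\Gamma}$. The natural comparison map
\[
A_\alpha\otimes_{A_\alpha^{\Gamma}}A_\alpha\longrightarrow \prod_{g\in\Gamma}A_\alpha,\qquad a\otimes b\mapsto (a\cdot g(b))_g,
\]
is an isomorphism modulo $\fm$ because the $\Gamma$-action on the special fiber is free; since both sides are finitely generated over the $\fm$-adically complete ring $A_\alpha^{\Gamma}$, Nakayama upgrades this to an isomorphism, showing that $\pi$ is a finite \'etale $\Gamma$-torsor. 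Smoothness of $\fX$ over $\cO_K$ then descends from that of $\fY$ along the faithfully flat \'etale cover $\pi$; properness follows from properness of $\fY$ together with surjectivity and finiteness of $\pi$; and $\fX_k=\fY_k/\Gamma$ is projective by the classical existence of quotients of projective schemes by free actions of finite groups.

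Finally, \'etaleness of $\pi$ gives $\pi^{*}\Omega^{j}_{\fX/\cO_K}\simeq \Omega^{j}_{\fY/\cO_K}$, and the projection formula for the affine morphism $\pi$ yields $\pi_{*}\Omega^{j}_{\fY/\cO_K}\simeq \Omega^{j}_{\fX/\cO_K}\otimes_{\cO_\fX}\pi_{*}\cO_\fY$ as $\Gamma$-equivariant sheaves on $\fX$. Taking $\Gamma$-invariants and using $(\pi_{*}\cO_\fY)^{\Gamma}\simeq \cO_\fX$ (since $\pi$ is an \'etale $\Gamma$-torsor) produces $(\pi_{*}\Omega^{j}_{\fY/\cO_K})^{\Gamma}\simeq \Omega^{j}_{\fX/\cO_K}$. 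Since $\pi$ is affine, $H^{i}(\fY,\Omega^{j}_{\fY/\cO_K})\simeq H^{i}(\fX,\pi_{*}\Omega^{j}_{\fY/\cO_K})$ as $\Gamma$-representations, and exactness of $(-)^{\Gamma}$ (which holds because $|\Gamma|$ is a unit) lets it commute with $H^{i}$, giving the stated isomorphism. The main technical point I expect to be most delicate is verifying the \'etale $\Gamma$-torsor property of $\pi$ rigorously in the formal-adic setting, i.e.\ transferring the classical characterization of free group quotients to the complete adic algebras $A_\alpha^{\Gamma}\subset A_\alpha$ via Nakayama; once this is in place, all remaining assertions are routine consequences.
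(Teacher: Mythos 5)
Your proposal is correct and rests on the same two pillars as the paper's argument: the quotient map $\pi\colon\fY\to\fX$ is a finite \'etale $\Gamma$-torsor, and $(-)^{\Gamma}$ is exact because $|\Gamma|\in\cO_K^{\times}$. The packaging differs in two places. For the construction and the torsor property, the paper works one infinitesimal thickening at a time, forming $\fX_n=\fY_{\cO_K/\fm^n}/\Gamma$ and citing SGA1 (Prop.~V.1.8 for existence, Cor.~V.2.4 for \'etaleness and $\pi_{n*}\cO\simeq\cO\otimes\bZ[\Gamma]$), then assembling the $\fX_n$ into a formal scheme; you instead build a $\Gamma$-stable affine cover of $\fY$ itself, pass to invariant rings, and upgrade the torsor property from the special fiber by Nakayama. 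Your route is more self-contained but is where the real work hides: the assertion that the splitting of $A_\alpha^{\Gamma}\hookrightarrow A_\alpha$ already gives finiteness and faithful flatness is not right as stated --- finiteness is a separate Noether-type integrality argument, and flatness is an output of the torsor property (or of the local criterion of flatness applied to the $\fm$-adic filtration), not of the splitting; also, to get injectivity and not merely surjectivity of $A_\alpha\otimes_{A_\alpha^{\Gamma}}A_\alpha\to\prod_{g}A_\alpha$ from Nakayama you need flatness of the target, so the order of deductions must be arranged with care. You correctly flag this as the delicate point, and it does go through. For the cohomological conclusion, the paper uses the Hochschild--Serre spectral sequence degenerating to its $a=0$ column, while you use the projection formula plus exactness of invariants; these are the same computation in different clothing.
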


\begin{proof}
For each $n$ the quotient $\fX_n:=\fY_{\cO_K/\fm^n}/\Gamma$ exists by \cite{SGA1} Expose 1 Proposition V.1.8 because admissibility of the action can be checked on the reduced subscheme which is projective in this case. The closed immersions $\fY_{\cO_K/\fm^n}\to \fY_{\cO_K/\fm^{n+1}}$ induce morphisms $\fX_n\to\fX_{n+1}$. Since the action is free, by Corollarie V.2.4 loc. cit. the morphisms $\pi_n:\fY_n\to\fX_n$ are etale with Galois group $\Gamma$ and the canonical maps $\cO_{\fX_n}\otimes_{\bZ}\bZ[\Gamma]\to \pi_{n*}\cO_{Y_n}$ are isomorphisms. 

Hence, the maps $\fX_n\to\fX_{n+1}$ are closed immersions inducing isomorphisms $\fX_{n+1}\times_{\cO_{K}/\fm^{n+1}}\cO_K/\fm^n\simeq \fX_n$ (this also follows from order of $\Gamma$ being invertible on $\fY$ but is not true in general for a non-free action) and the schemes $\fX_n$ form the desired quotient formal scheme $\fX$. If $L$ is an ample line bundle on $\fY_k$ then $\bigotimes\limits_{\gamma\in\Gamma}\gamma^*(L)$ descends to an ample line bundle on $\fX_k$ so $\fX_k$ is projective over $k$.

By the etaleness of the quotient map $\pi:\fY\to\fX$ the canonical morphisms $\Omega^i_{\fX/\cO_K}\to(\pi_*\Omega^i_{\fY/\cO_K})^{\Gamma}$ are isomorphisms and we get a Hochschild-Serre spectral sequence with the second page $E_2^{ab}=H^a(\Gamma, H^b(\fY,\Omega^i_{\fY/\cO_K}))$ converging to $H^{a+b}(\fX,\Omega^i_{\fX/\cO_K})$. Since higher cohomology of $\Gamma$ with coefficients in $\cO_K$-modules vanishes, we get the desired isomorphism.
\end{proof}

\begin{proof}[Proof of Theorem \ref{main}]

We first treat the case of $i+j=3$ and the general case follows from the Lemma \ref{prod} below. Let $\fZ$ be a formal abelian scheme with an action of $G$ over the ring of integers $\cO_L=W(k')$ in some unramified extension $L\supset \bQ_p$  provided by Proposition \ref{formabineq}. We would like to construct $\fX$ by taking the quotient of $\fZ$ by $G$ but first we need to make the action free in a way that does not spoil the discrepancy between ranks of invariant submodules. By \cite{raynaud} Proposition 4.2.3 or \cite{bms} Section 2.2 there exists a smooth projective complete intersection $\fY$ of dimension $\geq 4$ in projective space over $\cO_L$ equipped with a free action of $G$. We equip the product $\fZ\times_{\cO_L}\fY$ with the diagonal action of $G$ which is free because the action on the second factor is free. Note that $H^i(\fZ\times\fY,\cO)\simeq H^i(\fZ,\cO)$ and $H^0(\fZ\times\fY,\Omega^i_{\fZ\times\fY/\cO_L})\simeq H^0(\fZ,\Omega^i_{\fZ/\cO_L})$  for $i\leq 3$ by K\"unneth formula and Lefschetz hyperplane theorem (Proposition 5.3 of \cite{abm}) applied to $\fY$ .

First, we construct the scheme with desired property over the unramified extension $\cO_L$ as the quotient $\fX':=(\fZ\times\fY)/G$ provided by the Lemma \ref{quot}. We have $H^3(\fX',\cO)=H^3(\fZ\times\fY,\cO)^G=H^3(\fZ,\cO)^G$ and $H^0(\fX',\Omega^3_{\fX'/\cO_L})^G=H^0(\fZ,\Omega^3_{\fZ/\cO_L})^G$.  Proposition \ref{formabineq} supplied $\fZ$ such that the ranks of invariant modules $H^3(\fZ,\cO)^G$ and $H^0(\fZ,\Omega^3_{\fZ/\cO_L})^G$ are different, so $H^3(\fX',\cO)$ and $H^0(\fX',\Omega_{\fX'/\cO_L}^3)$ are indeed of different rank. By Proposition \ref{deg2} we have $3h^{3,0}(\fX')+h^{2,1}(\fX')=3h^{0,3}(\fX')+h^{1,2}(\fX')$ so the ranks of $H^2(\fX',\Omega^1_{\fX'/\cO_L})$ and $H^1(\fX',\Omega^2_{\fX'/\cO_L})$ are forced to be different as well. 

Finally, the example over $\bZ_p$ is obtained by Weil restriction of scalars $\fX:=\Res^{\cO_L}_{\bZ_p}\fX'$. Since $\cO_L$ is finite etale over $\bZ_p$ this is a smooth proper formal scheme over $\bZ_p$ with projective special fiber $\fX_{\bF_p}=\Res^{k'}_{\bF_p}\fX_{k'}$. Since $\fX\times_{\bZ_p}\cO_L$ is isomorphic to $(\fX')^{\times d}$ where $d$ is the degree of $k'$ over $\bF_p$, the Hodge cohomology modules of $\fX$ are free and $h^{3,0}(\fX)-h^{0,3}(\fX)=d(h^{3,0}(\fX')-h^{0,3}(\fX'))$ because the symmetry on cohomology of $\fX'$ in degrees $1$ and $2$ holds by Corollary \ref{cordeg2}.

\begin{lm}\label{prod}
Let $\fX$ be a smooth proper formal scheme over $\cO_K$ such that $h^{3,0}(\fX)\neq h^{0,3}(\fX)$. Then for any $i,j\geq 0$ with $i\neq j$ and $i+j>3$ there exists a smooth projective scheme $\fY$ over $\cO_K$ such that $h^{i,j}(\fX\times\fY)\neq h^{j,i}(\fX\times \fY)$.
\end{lm}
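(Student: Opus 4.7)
The strategy is to exploit the Künneth formula to promote the degree-$3$ Hodge asymmetry of $\fX$ to any bi-degree $(i,j)$ with $i\neq j$ and $i+j>3$. The Künneth formula for Hodge cohomology of smooth proper formal schemes gives
\[
h^{p,q}(\fX\times\fY)=\sum_{a+b=p,\,c+d=q}h^{a,c}(\fX)\,h^{b,d}(\fY),
\]
and since the generic fiber of any smooth projective $\fY/\cO_K$ is a smooth projective variety over the characteristic-zero field $K$, classical Hodge theory gives Hodge symmetry $h^{b,d}(\fY)=h^{d,b}(\fY)$. Writing $\delta^{p,q}(\bullet):=h^{p,q}(\bullet)-h^{q,p}(\bullet)$ and introducing the polynomial $P(x,y):=\sum_{a,c}\delta^{a,c}(\fX)\,x^ay^c\in\bQ[x,y]$, this rewrites as
\[
\delta^{i,j}(\fX\times\fY)=[x^iy^j]\,P(x,y)\,H_\fY(x,y),
\]
where $H_\fY:=\sum_{b,d}h^{b,d}(\fY)x^by^d$ is the (symmetric) Hodge polynomial of $\fY$.

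First I would analyze $P$: it is antisymmetric, and Proposition~\ref{deg2} applied to $\fX$ in each degree $n$ gives $\sum_a a\,\delta^{a,n-a}(\fX)=0$, equivalent to $(\partial P/\partial x)(t,t)\equiv 0$. Combined with antisymmetry, these conditions force $P$ to be divisible by $(x-y)^3$; write $P=(x-y)^3 S(x,y)$ with $S\in\bQ[x,y]$ symmetric. Since the degree-$3$ component of $P$ is $\delta^{3,0}(\fX)(x-y)^3$, we have $S(0,0)=\delta^{3,0}(\fX)\neq 0$ by hypothesis. The problem thus reduces to producing a realizable symmetric Hodge polynomial $H_\fY$ with $[x^iy^j]\,(x-y)^3 S\cdot H_\fY\neq 0$.

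The natural candidate is $\fY=E^g$, the $g$-fold self-product of a fixed elliptic curve $E/\cO_K$ (base-changed from $\bZ$), which is smooth and projective. Its Hodge polynomial is $((1+x)(1+y))^g$, so
\[
f(g):=\delta^{i,j}(\fX\times E^g)=[x^iy^j]\,(x-y)^3S(x,y)\,((1+x)(1+y))^g
\]
is a polynomial in $g$ of degree at most $i+j-3$. If $f\not\equiv 0$, then $f(g)\neq 0$ for all but finitely many $g$, and $\fY=E^g$ for any such $g$ is the desired smooth projective scheme.

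The main obstacle is verifying $f\not\equiv 0$ for every admissible $(i,j)$. Using the identity $((1+x)(1+y))^g=e^{gL}$ with $L:=\log(1+x)+\log(1+y)=(x+y)-(x^2+y^2)/2+\cdots$, one finds $[g^N]f(g)=\tfrac{1}{N!}[x^iy^j]\,P\cdot L^N$. A direct binomial computation identifies the leading coefficient as
\[
[g^{i+j-3}]f=\frac{\delta^{3,0}(\fX)}{i!\,j!}\cdot(i-j)\bigl((i-j)^2-3(i+j)+2\bigr),
\]
which is nonzero outside the sparse ``degenerate'' locus $\{(i,j):(i-j)^2=3(i+j)-2\}$ (e.g.\ $(5,1)$, $(7,2)$, $(12,5),\ldots$). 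For the degenerate pairs the leading coefficient vanishes but one checks---either by computing the next-order coefficient explicitly, or by evaluating $f$ at a small concrete value such as $g=\max(i,j)-3$---that $f$ is still not identically zero; alternatively one can replace $E^g$ by $E^g\times\bP^1$, whose Hodge polynomial has an extra factor $(1+xy)$, shifting the computation by $(i,j)\mapsto(i-1,j-1)$ off the degenerate locus. This combinatorial non-vanishing step on the degenerate locus is the key technical point of the proof.
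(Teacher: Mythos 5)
Your route is genuinely different from the paper's: you take $\fY=E^g$ and analyze $f(g):=\delta^{i,j}(\fX\times E^g)$ as a polynomial in $g$, whereas the paper builds $\fY$ from a hypersurface $\fT_{d,n}$ by iterated blow-ups of projective spaces, engineered so that exactly one movable Hodge number $h^{n,0}(\fT_{d,n})=\binom{d-1}{n+1}$ lands in the needed bidegree and its leading-coefficient analysis (in $d$) never degenerates. Everything in your argument up to and including the identity $[g^{i+j-3}]f=\frac{\delta^{3,0}(\fX)}{i!\,j!}(i-j)\bigl((i-j)^2-3(i+j)+2\bigr)$ is correct (the factorization $P=(x-y)^3S$ with $S$ symmetric and $S(0,0)=\delta^{3,0}(\fX)$ checks out), and this disposes of all $(i,j)$ off the locus $(i-j)^2=3(i+j)-2$. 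But that locus is nonempty ($(5,1),(7,2),(12,5),\dots$), and your handling of it --- which you yourself call ``the key technical point'' --- is a genuine gap: none of the three fixes is actually carried out, and two of them do not work as stated.

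Concretely: (a) ``evaluating $f$ at a small concrete value of $g$'' cannot work, because $f(g)$ involves $\delta^{a,b}(\fX)$ for all $3\le a+b\le i+j$, and those in degrees $\ge 4$ are not controlled by the hypothesis; only leading-coefficient statements are available. (b) ``computing the next-order coefficient'' is a nontrivial computation you have not done: one must first verify that the $\delta^{4,0}(\fX)$-contribution to $[g^{i+j-4}]f$ drops out on the degenerate locus (it does, being proportional to the same vanishing bracket $[x^iy^j](x-y)^3(x+y)^{i+j-3}$), and then that the surviving term, proportional to $[x^iy^j](x-y)^5(x+y)^{i+j-5}$, equivalently to $[t^5](1+t)^i(1-t)^j$, is nonzero whenever $[t^3](1+t)^i(1-t)^j=0$ --- true for $(5,1)$ and $(7,2)$ by direct check, but unproven in general. (c) The $E^g\times\bP^1$ variant is the one that can be completed, but not by ``shifting the computation to $(i-1,j-1)$'': one has $\delta^{i,j}(\fX\times E^g\times\bP^1)=f_{i,j}(g)+f_{i-1,j-1}(g)$, and $f_{i,j}$ may a priori have degree $i+j-4$ in $g$, strictly larger than $\deg f_{i-1,j-1}=i+j-5$, so the leading coefficient of $f_{i-1,j-1}$ alone proves nothing about the sum. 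The repair is a dichotomy: either $f_{i,j}\not\equiv 0$ and $\fY=E^g$ already works for suitable $g$, or $f_{i,j}\equiv 0$ and then $\delta^{i,j}(\fX\times E^g\times\bP^1)=f_{i-1,j-1}(g)$, which is not identically zero since $(i-1,j-1)$ never lies on the degenerate locus when $(i,j)$ does (its leading coefficient is $\frac{6\,\delta^{3,0}(\fX)(i-j)}{(i-1)!\,(j-1)!}$, and degenerate pairs with $i+j>3$ always have $i,j\ge 1$). With that dichotomy inserted, your proof closes; as written, it does not.
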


\begin{proof}
Recall that for a formal scheme $\fT$ the number $\delta^{i,j}(\fT)$ is defined as the difference $h^{i,j}(\fT)-h^{j,i}(\fT)$. Note that if $\fY$ satisfies Hodge symmetry then \begin{equation}\label{productdisc}\delta^{i,j}(\fT\times\fY)=\displaystyle\sum\limits_{\substack{i_1+i_2=i \\ j_1+j_2=j}}\delta^{i_1,j_1}(\fT)h^{i_2,j_2}(\fY)\end{equation}

By Proposition \ref{deg2} we have $\delta^{2,1}(\fX)=-3\delta^{3,0}(\fX)$ and $\delta^{1,0}(\fX)=\delta^{2,0}(\fX)=0$. Let us assume that $i>j$. For a scheme $\fY$ denote by $H_{\fY}(x,y):=\sum\limits_{i,j}h^{i,j}(\fY)x^iy^j$ its Hodge polynomial. If $\fY_1\subset\fY_2$ is a smooth closed subscheme of codimension $r+1$ in a smooth proper scheme then the Hodge polynomial of the blow-up is given by $$H_{Bl_{\fY_1}\fY_2}(x,y)=H_{\fY_2}(x,y)+H_{\fY_1}(x,y)\cdot(xy+(xy)^2+\dots+(xy)^r)$$ Let $\fT_{d,n}$ be a smooth degree $d$ hypersurface in $\bP^{n+1}_{\cO_K}$ where $d$ and $n$ are numbers that we will choose later. Construct a sequence of smooth projective schemes starting with $\fY_0=\fT_{d,n}$ and for $s\geq 0$ defining the scheme $\fY_{s+1}$ as the blow-up of some projective space $\bP^{N_s}_{\cO_K}$ along some embedding $\fY_s\subset\bP^{N_s}_{\cO_K}$ of codimension $\geq 2$ (the numbers $N_s>\dim_{\cO_K}\fY_s+1$ can be chosen arbitrarily). By the above formula we have $H_{\fY_s}(x,y)=F(xy)+H_{\fT_{d,n}}(x,y)\cdot (xy)^s\cdot G(xy)$ where $F(t),G(t)\in 1+t\bZ[t]$ are some polynomials.

Assume first that $i> j+3$. Then take $n=i-3-j,s=j$ and consider $\fY:=\fY_s$ obtained by the above inductive procedure from $\fT_{d,n}$. We then have $h^{i-2,j-1}(\fY)=h^{i-1,j-2}(\fY)=h^{i,j-3}(\fY)=0$ and $h^{i-3,j}(\fY)=h^{n,0}(\fT_{d,n})$. This turns the formula (\ref{productdisc}) into $$\delta^{i,j}(\fX\times\fY)=\delta^{3,0}(\fX)\cdot h^{i-3,j}(\fY)+\sum\limits_{i_2+j_2<i+j-3}\delta^{i-i_2,j-j_2}(\fX)\cdot h^{i_2,j_2}(\fY)$$ By \cite{SGA72} XI.Theoreme 1.5 the Hodge numbers $h^{a,b}(\fT_{d,n})$ of a hypersurface are zero unless $a+b=n$ or $a=b$ and $h^{a,a}(\fT_{d,n})=1$ for $0\leq a\leq n, 2a\neq n$. It follows that the Hodge polynomial $H_{\fY}(x,y)$ is congruent modulo the ideal $(x,y)^{n+s}$ to a polynomial of the form $K(xy)$ with coefficients not depending on the degree $d$ of the hypersurface. In particular, $\delta^{i,j}(\fX\times\fY)$ is the sum of $\delta^{3,0}(\fX)\cdot h^{i-3,j}(\fY)=\delta^{3,0}(\fX)\cdot h^{n,0}(\fT_{d,n})$ and a number that does not depend on $d$. By loc. cit. Corollaire 2.4 the number is $h^{n,0}(\fT_{d,n})$ is equal to $\binom{d-1}{n+1}$ so for large enough $d$ this sum will be non-zero.

If $i=j+2$ then take $\fY=\fY_s$ with $n=1,s=j-1$. An analogous computation gives that $\delta^{i,j}(\fX\times\fY)$ is the sum of a number that does not depend on $d$ and the expression $\delta^{2,1}(\fX)\cdot h^{1,0}(\fT_{d,1})+\delta^{3,0}(\fX)\cdot h^{0,1}(\fT_{d,1})=\delta^{3,0}(\fX)(h^{0,1}(\fT_{d,1})-3h^{1,0}(\fT_{d,1}))=-2\delta^{3,0}(\fX)\binom{d-1}{2}$ which is non-zero for $d>2$.

Finally, if $i= j+3$ or $i=j+1$ we will be able to move asymmetry into higher cohomology just via Tate twists. Consider the scheme $\fY=(\bP^1_{\cO_K})^d$ where we will again choose $d$ at the end. We have $$\delta^{i,j}(\fX\times\fY)=\sum\limits_{r\leq \frac{i+j-3}{2}}\delta^{i-r,j-r}(\fX)\cdot\binom{d}{r}$$ This is a polynomial in $d$ with leading coefficient $\delta^{i-r,j-r}(\fX)$ with $r=\frac{i+j-3}{2}$ which is nothing but $\pm\delta^{3,0}(\fX)$ or $\pm\delta^{2,1}(\fX)$ so this expression is non-zero for a large enough value of $d$.
\end{proof}

This finishes the proof of Theorem \ref{main}. \end{proof}

\begin{cor}\label{maincor}
For every pair of positive numbers $i\neq j$ with $i+j\geq 3$ there exists a smooth proper rigid-analytic variety $X$ over $\bQ_p$ admitting a smooth formal model $\fX$ over $\Spf\bZ_p$ with projective special fiber $\fX_{\bF_p}$ such that $$h^{i,j}(X)\neq h^{j,i}(X).$$
\end{cor}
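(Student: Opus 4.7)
The plan is to deduce the corollary essentially as a formal consequence of Theorem~\ref{main} combined with the comparison between formal and rigid-analytic Hodge cohomology recorded in Lemma~\ref{gencoh}. The theorem already provides, for each pair $(i,j)$ with $i \neq j$ and $i+j \geq 3$, a smooth proper formal scheme $\fX$ over $\bZ_p$ with projective special fiber $\fX_{\bF_p}$ such that $H^i(\fX,\Omega^j_{\fX/\bZ_p})$ and $H^j(\fX,\Omega^i_{\fX/\bZ_p})$ are free $\bZ_p$-modules of distinct ranks. This is almost exactly the statement of the corollary, except that the discrepancy is recorded on the formal integral model rather than on the rigid-analytic generic fiber.

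To make the passage, I take $X := \fX_{\bQ_p}$ to be the rigid-analytic generic fiber of $\fX$; it is smooth and proper over $\bQ_p$ and $\fX$ is by construction a smooth formal model of $X$ over $\Spf \bZ_p$ with projective special fiber $\fX_{\bF_p}$. Lemma~\ref{gencoh} supplies a canonical isomorphism
\begin{equation*}
H^a(X,\Omega^b_{X/\bQ_p}) \;\simeq\; H^a(\fX,\Omega^b_{\fX/\bZ_p})\otimes_{\bZ_p}\bQ_p
\end{equation*}
for every pair $(a,b)$. Since the two integral Hodge cohomology modules in question are free over $\bZ_p$, inverting $p$ preserves their $\bZ_p$-rank as a $\bQ_p$-dimension; explicitly,
\begin{equation*}
h^{i,j}(X) \;=\; \rk_{\bZ_p} H^i(\fX,\Omega^j_{\fX/\bZ_p}) \;=\; h^{i,j}(\fX),
\end{equation*}
and likewise with $i$ and $j$ exchanged. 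Therefore the inequality $h^{i,j}(\fX) \neq h^{j,i}(\fX)$ provided by Theorem~\ref{main} transfers verbatim into the desired inequality $h^{i,j}(X) \neq h^{j,i}(X)$.

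There is no genuine obstacle here: the entire content of the corollary is packaged in Theorem~\ref{main}, and the only additional ingredient is the routine identification of rigid-analytic Hodge cohomology with the generic fiber of the integral Hodge cohomology, together with the fact that Theorem~\ref{main} delivers the discrepancy already on the level of free $\bZ_p$-modules (so no torsion phenomena complicate the passage from rank to $\bQ_p$-dimension).
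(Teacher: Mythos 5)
Your proposal is correct and follows exactly the paper's own argument: take $X$ to be the generic fiber of the formal scheme $\fX$ furnished by Theorem~\ref{main} and apply Lemma~\ref{gencoh} to identify $h^{i,j}(X)$ with $h^{i,j}(\fX)$. The extra remark about freeness of the integral Hodge cohomology is a harmless elaboration (Lemma~\ref{gencoh} as stated already gives $h^{i,j}(X)=h^{i,j}(\fX)$ after inverting $p$), so nothing further is needed.
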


\begin{proof}
Take $X$ to be the generic fiber of a formal scheme $\fX$ provided by Theorem \ref{main}. Applying Lemma \ref{gencoh} gives the result.
\end{proof}

\section{Embellishing the main example and a question}

We end by noting that the examples provided by Theorem \ref{main} can be modified to avoid some of the characteristic $p$ ``pathologies".

In all of the examples constructed above the Hodge cohomology groups $H^i(\fX,\Omega^j_{\fX/\bZ_p})$ are free modules over $\bZ_p$ so the Hodge numbers of the special and generic fibers of $\fX$ coincide. In particular, the Hodge symmetry for the special fiber $\fX_{\bF_p}$ fails as well. The next lemma shows that symmetry for the generic fiber cannot be salvaged by requiring that for the special fiber: there are enough projective schemes failing symmetry for the special fiber to cancel out the asymmetry on the special fiber by taking products. We only treat here one particular pair of degrees. 

\begin{lm}\label{specialfibersym}
There exists a smooth proper formal scheme $\fX'$ over $\Spf\bZ_p$ with projective special fiber such that $h^{3,0}(\fX')\neq h^{0,3}(\fX')$ while $h^{3,0}(\fX'_{\bF_p})=h^{0,3}(\fX'_{\bF_p})$.
\end{lm}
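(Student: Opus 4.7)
The plan is to set $\fX' := \fX \times_{\bZ_p} \fW$ where $\fX$ is the formal scheme furnished by Theorem \ref{main} for $(i,j) = (3,0)$ and $\fW$ is an auxiliary smooth projective scheme over $\bZ_p$ chosen to cancel the special fiber asymmetry of $\fX$. By the direction of inequality fixed in Proposition \ref{formabineq} we have $\delta^{3,0}(\fX) < 0$, and because the Hodge cohomology of $\fX$ is a free $\bZ_p$-module by construction $\delta^{3,0}(\fX_{\bF_p}) = \delta^{3,0}(\fX)$; Corollary \ref{cordeg2} applied to $\fX_{\bQ_p}$ together with the same freeness yields $\delta^{1,0}(\fX_{\bF_p}) = \delta^{2,0}(\fX_{\bF_p}) = 0$.

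Suppose $\fW$ satisfies $\delta^{i,0}(\fW_{\bF_p}) = 0$ for $i = 1, 2$ and $\delta^{3,0}(\fW_{\bF_p}) = -\delta^{3,0}(\fX)$; its generic fiber $\fW_{\bQ_p}$ is then automatically Hodge symmetric in all degrees, being smooth projective over a characteristic $0$ field. The K\"unneth formula applied to $\fX' = \fX \times_{\bZ_p} \fW$, combined with the Hodge symmetries of $\fW_{\bQ_p}$ (on the generic fiber) and of $\fX_{\bF_p}$ in low degrees (on the special fiber), kills all cross terms and gives
\[
\delta^{3,0}(\fX'_{\bQ_p}) = \delta^{3,0}(\fX_{\bQ_p}) \neq 0, \qquad \delta^{3,0}(\fX'_{\bF_p}) = \delta^{3,0}(\fX_{\bF_p}) + \delta^{3,0}(\fW_{\bF_p}) = 0,
\]
which is precisely what the lemma requires.

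The whole argument therefore reduces to producing such a $\fW$. Since the Hodge cohomology of a smooth projective $\bZ_p$-scheme may carry $p$-torsion, its special fiber Hodge numbers can strictly exceed the corresponding generic Hodge numbers, and the resulting corrections need not be symmetric under $(i,j) \leftrightarrow (j,i)$. A natural way to engineer such examples is to take quotients $\fW = \fV/H$ of auxiliary smooth projective $\bZ_p$-schemes by free actions of a cyclic $p$-group $H$: the $p$-torsion contributed to higher rows of the Hochschild--Serre spectral sequence computing $H^\ast(\fW, \Omega^\ast)$ can, for a judicious choice of $\fV$ and of the $H$-action on its Hodge cohomology, be arranged to distribute asymmetrically between $H^3(\fW, \cO)$ and $H^0(\fW, \Omega^3)$ while remaining symmetric in lower degrees. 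Taking products of several such building blocks and, if necessary, scaling up $\fX$ by taking self-products (which multiplies $\delta^{3,0}$ without breaking the vanishing of $\delta^{1,0}, \delta^{2,0}$) then adjusts $\delta^{3,0}(\fW_{\bF_p})$ to the precise value $-\delta^{3,0}(\fX)$. Exhibiting a $\fW$ with this controlled, prescribed degree-$3$ asymmetry on the special fiber is the main obstacle of the proof.
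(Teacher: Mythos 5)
Your reduction is sound and is in fact the same strategy the paper uses: take $\fX' = \fX\times\fW$ with $\fX$ from Theorem \ref{main} (free Hodge cohomology, $\delta^{3,0}(\fX)<0$, symmetry in degrees $1$ and $2$) and an auxiliary smooth projective $\fW$ whose generic fiber is Hodge symmetric but whose special fiber carries a compensating degree-$3$ asymmetry; the K\"unneth bookkeeping you do is correct. The problem is that you stop exactly where the lemma's actual content begins: you do not produce $\fW$, and you yourself flag its existence as ``the main obstacle of the proof.'' That existence statement is not a routine verification --- it is the whole point of the lemma, so as written the proof is incomplete.

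Moreover, the mechanism you propose for $\fW$ --- free quotients by a cyclic $p$-group --- cannot by itself meet your own specifications. A free quotient by $\bZ/p$ is the Godeaux--Serre approximation of $B(\bZ/p)$, whose mod-$p$ Hodge cohomology is concentrated in $H^\ast(-,\cO)$ (Hodge polynomial $\tfrac{1}{1-y}$ on the special fiber), so it produces $\delta^{j,0}\neq 0$ already for $j=1,2$, violating the constraints $\delta^{1,0}(\fW_{\bF_p})=\delta^{2,0}(\fW_{\bF_p})=0$ that your K\"unneth cancellation relies on. The paper's construction needs two further ingredients you don't have: it approximates the classifying stack $B(\mu_p\times\bZ/p)$ by a smooth projective scheme using Theorem 1.2 of \cite{abm} --- note that $\mu_p$ is infinitesimal in characteristic $p$, so this is \emph{not} a quotient by a free action of a discrete group --- and it uses Totaro's computation $H_{B\mu_{p,\bF_p}}(x,y)=\tfrac{1+x}{1-xy}$, whose $(1+x)$ factor supplies the $h^{1,0}$ needed to keep degree $1$ symmetric while leaving a net discrepancy in degree $3$. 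Finally, the paper multiplies by $\fE^{l}$ for an elliptic curve $\fE$ and chooses $l$ so that the resulting $\delta^{3,0}$ of the special fiber is exactly $-\delta^{3,0}(\fX_{\bF_p})$; some such tuning device is also absent from your sketch (your alternative of taking self-products of $\fX$ only rescales $\delta^{3,0}(\fX)$ multiplicatively, so it cannot hit an arbitrary prescribed cancellation on its own). Until a concrete $\fW$ with controlled special-fiber Hodge numbers is exhibited, the proof does not go through.
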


\begin{proof}
The proof of Theorem \ref{main} provides us with a smooth proper scheme $\fX$ over $\bZ_p$ with projective special fiber such that all Hodge groups $H^i(\fX,\Omega^j_{\fX/\bZ_p})$ are free over $\bZ_p$ and $h^{3,0}(\fX)< h^{0,3}(\fX)$ while $h^{1,0}(\fX)=h^{0,1}(\fX)=0$ (the Hodge numbers in degree $1$ vanish because representations $U,V$ used in proof of Proposition \ref{formabineq} have trivial invariants). Since Hodge cohomology modules are free, the Hodge numbers of the special fiber $\fX_{\bF_p}$ are equal to those of $\fX$.

We will now take the product of $\fX$ with an auxiliary projective scheme that will be constructed by approximating the stack $B(\mu_p\times\bZ/p)$. By Theorem 1.2 of \cite{abm} applied over $\bZ_p$ to the group scheme $\mu_p\times\bZ/p$ with $d=3$ there exists a smooth projective scheme $\fY'$ such that $H^j(\fY'_{\bF_p},\Omega^i_{\fY'_{\bF_p}})=H^j(B(\mu_p\times\bZ/p)_{\bF_p},\Omega^i)$ and $H^j(\fY',\Omega^i_{\fY'/\bZ_p})=H^j(B(\mu_p\times\bZ/p)_{\bZ_p},\Omega^i)$ for pairs $i,j$ with $i+j\geq 3$ and $i=0$ or $j=0$. The Hodge polynomial $H_{B(\mu_p\times\bZ/p)_{\bF_p}}(x,y):=\sum\limits_{i,j}h^{i,j}(B(\mu_p\times\bZ/p)_{\bF_p})x^iy^j$ of this stack is given by the product $H_{B\mu_{p,\bF_p}}(x,y)H_{B\bZ/p_{\bF_p}}(x,y)$. By \cite{totaro} Proposition 11.1 the first multiple $H_{B\mu_{p,\bF_p}}$ is equal to $\frac{1+x}{1-xy}$ and $H_{B(\bZ/p)_{\bF_p}}$ is equal to $\frac{1}{1-y}$ because the only non-zero Hodge cohomology groups of the classifying stack of a finite discrete group $\Gamma$ are given by group cohomology $H^i(B\Gamma_{\bF_p},\cO)\simeq H^i(\Gamma,\bF_p)$. Take $\fY:=\fY'\times \fE^l$ where $\fE$ is an elliptic curve over $\bZ_p$ and $l$ is a number that we will choose at the end.

By the above computation, $H_{\fY_{\bF_p}}(x,y)$ is given by $\frac{1}{1-y}\cdot\frac{1+x}{1-xy}\cdot(1+x+y+xy)^l$ modulo an element of the ideal $(x^4,xy,y^4)$. For the purpose of computing $h^{3,0}((\fX\times\fY)_{\bF_p})$ and $h^{0,3}((\fX\times\fY)_{\bF_p})$ we only care about this polynomial modulo $xy$ and we have $H_{(\fX\times\fY)_{\bF_p}}(x,y)\equiv (1+h^{2,0}(\fX_{\bF_p})x^2+h^{3,0}(\fX_{\bF_p})x^3+h^{0,2}(\fX_{\bF_p})y^2+h^{0,3}(\fX_{\bF_p})y^3)(1+(l+1)x+(\binom{l}{2}+l)x^2+(\binom{l}{3}+\binom{l}{2})x^3+(l+1)y+(\binom{l}{2}+l+1)y^2+(\binom{l}{3}+\binom{l}{2}+l+1)y^3)$ modulo the ideal $(x^4,xy,y^4)$. The Hodge numbers of the special fiber $\fX_{\bF_p}\times\fY_{\bF_p}$ are thus given by $h^{3,0}((\fX\times\fY)_{\bF_p})=\binom{l}{3}+\binom{l}{2}+h^{3,0}(\fX_{\bF_p})+(l+1)h^{2,0}(\fX_{\bF_p})$, $h^{0,3}((\fX\times\fY)_{\bF_p})=\binom{l}{3}+\binom{l}{2}+l+1+h^{0,3}(\fX_{\bF_p})+(l+1)h^{0,2}(\fX_{\bF_p})$. Hence, $\delta^{3,0}((\fX\times\fY)_{\bF_p})=\delta^{3,0}(\fX_{\bF_p})+l+1$. Since $\delta^{3,0}(\fX_{\bF_p})<0$ there exists $l$ that makes this difference equal to zero. 

The product $\fX\times\fY$ is our desired scheme $\fX'$. Note that the Hodge numbers $h^{i,0}(\fY_{\bQ_p})$ of the generic fiber $\fY_{\bQ_p}$ vanish for $i\leq 3$ so the asymmetry $h^{3,0}(\fX')=h^{3,0}(\fX)\neq h^{0,3}(\fX)= h^{0,3}(\fX')$ persists.
\end{proof}

Another feature of the counterexamples to Hodge symmetry obtained by taking quotients of formal abelian schemes by finite groups is the absence on the special fiber of an ample line bundle of degree prime to $p$. Indeed, if the $d$-dimensional special fiber of $\fX$ (in the notation of the proof of Theorem \ref{main}) admits a line bundle $L$ such that $L^d\not\equiv 0\pmod{p}$ then it induces a $G$-equivariant ample line bundle $L'$ on $\fZ_{k'}$ with $(L')^{\dim \fZ_{k'}}\not\equiv 0\pmod{p}$ as well (because the order of $G$ is prime to $p$). It induces a separable $G$-equivariant polarization and hence and isomorphism $H^0(\fZ,\Omega^1_{\fZ_k'/k'})\simeq H^1(\fZ_{k'},\cO)^{\vee}$ of $G$-representations. Hence, for all $i$ and $j$ the representations $H^i(\fZ_{k'},\Omega^j_{\fZ_{k'}/k'})$ and $H^j(\fZ_{k'},\Omega^i_{\fZ_{k'}/{k'}})$ would be dual to each other and Hodge symmetry for $\fX_{k'}$ and, hence, for $\fX$ would have to hold.

We note, however, that blowing up a point provides the special fiber with a prime to $p$ polarization without spoiling Hodge asymmetry

\begin{lm}
For every pair of distinct positive integers $i,j$ with $i+j\geq 3$ there exists a smooth proper formal scheme $\fX'$ over $\bZ_p$ such that $h^{i,j}(\fX')\neq h^{j,i}(\fX')$ and the special fiber carries an ample divisor $L$ such that its top self-intersection is prime to $p$: $L^{\dim(\fX'_{\bF_p})}\not\equiv 0\pmod{p}$.
\end{lm}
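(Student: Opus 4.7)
The strategy is to apply Theorem \ref{main} to obtain a smooth proper formal scheme $\fX$ over $\bZ_p$ of relative dimension $d$ with $h^{i,j}(\fX)\neq h^{j,i}(\fX)$, and then to repair the polarization situation by blowing up along a zero-dimensional smooth closed subscheme $Z\subset\fX$. The two properties we need will come essentially for free: the standard blow-up formula $h^{i,j}(\Bl_Z\fX)=h^{i,j}(\fX)+\sum_{k=1}^{d-1}h^{i-k,j-k}(Z)$ preserves $h^{i,j}-h^{j,i}$ for $i\neq j$ whenever $Z$ is zero-dimensional, since the only nonvanishing Hodge number of such $Z$ is $h^{0,0}$ and this contributes only to bidegrees $(k,k)$; meanwhile the exceptional divisor of the blow-up gives a class that can be subtracted from a pull-back of a polarization to adjust the top self-intersection modulo $p$.

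In more detail, I would fix an ample line bundle $H$ on $\fX_{\bF_p}$ with self-intersection $N=H^d$; if $p\nmid N$ then $H$ itself works and no blow-up is needed, so assume $p\mid N$. By the Lang--Weil estimates the positive-dimensional smooth projective variety $\fX_{\bF_p}$ has $\bF_{p^n}$-points for all sufficiently large $n$, in particular for some $n$ coprime to $p$. The associated closed point $z$ has residue degree $r=[\kappa(z):\bF_p]$ dividing $n$, so $p\nmid r$. Hensel's lemma, together with smoothness of $\fX$ over $\bZ_p$, lifts $z$ to a section $Z:=\Spf W(\kappa(z))\hookrightarrow\fX$; this is a regular closed immersion of codimension $d$ into a smooth formal scheme, so $\fX':=\Bl_Z\fX$ is smooth proper over $\bZ_p$ with projective special fiber $\Bl_z\fX_{\bF_p}$.

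The Hodge number count then follows from the blow-up formula recalled above: $h^{i,j}(\fX')=h^{i,j}(\fX)$ for all $i\neq j$, so the asymmetry inherited from Theorem \ref{main} persists. For the polarization, let $\pi\colon\fX'_{\bF_p}\to\fX_{\bF_p}$ denote the blow-down and $E$ the exceptional divisor. A short intersection-theoretic calculation, using that $\pi^*H|_E=0$ (since $\pi^*H$ is pulled back from a point) and that $N_{E/\fX'_{\bF_p}}\cong\cO_E(-1)$, gives $(\pi^*(aH)-E)^d=a^dN-r$ with all the mixed terms vanishing. For $a$ sufficiently large the divisor $\pi^*(aH)-E$ is ample by the Seshadri criterion, and $a^dN-r\equiv -r\not\equiv 0\pmod p$ by the choice of $r$. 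The only step that is not entirely formal is the transfer of the Hodge blow-up formula to the formal-schemes-over-$\bZ_p$ setting, which I do not view as a serious obstacle: blowing up a regularly immersed smooth subscheme commutes with reduction modulo $p^n$, and the Hodge numbers of a smooth proper formal scheme over $\bZ_p$ are controlled by the Hodge numbers of these reductions via Lemma \ref{gencoh}.
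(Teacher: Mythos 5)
Your argument is correct, and it follows the same overall strategy as the paper's proof: start from the $\fX$ of Theorem \ref{main}, blow up a zero-dimensional center (so that $h^{i,j}$ is unchanged for $i\neq j$, only the diagonal Hodge numbers being affected), and use the exceptional divisor to adjust the top self-intersection of a polarization modulo $p$. The two executions differ at both choice points, and in each case your variant is more self-contained. For the center, the paper blows up a $\bZ_p$-point $x:\Spf\bZ_p\to\fX$ and has to note parenthetically that the construction of Theorem \ref{main} can be modified so that such a point exists; you instead blow up the $W(\kappa(z))$-lift of a closed point of residue degree $r$ prime to $p$, whose existence is automatic from Lang--Weil (one should just apply it to a geometrically irreducible component, which is harmless here). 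For the polarization, the paper takes an ample $H$ on the blow-up itself and seeks $n\geq 0$ with $(H+nE)^d=H^d-(k-n)^d+k^d$ prime to $p$, justifying ampleness of $H+nE$ by ``ample plus effective is ample''; since $(H+nE)\cdot\ell=k-n$ for a line $\ell\subset E$, that step really requires $n$ to stay below $k$, i.e.\ $H$ to be sufficiently positive along $E$. Your choice $\pi^*(aH)-E$ with $H$ ample downstairs and $a\gg 0$ avoids this tension entirely: it is ample by the standard fact that $\cO(-E)$ is relatively ample for the blow-down (no need for the Seshadri criterion), and its degree $a^dN-r\equiv -r\pmod p$ is visibly prime to $p$. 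Both proofs rely, as you acknowledge, on the blow-up formula for Hodge cohomology of formal schemes along a horizontal, regularly immersed, relatively smooth center; this is standard and the paper also leaves it implicit.
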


\begin{proof}
Let $\fX$ be a formal scheme provided by Theorem \ref{main}. Choose a point $x:\Spf\bZ_p\to\fX$ (one sees easily that the proof of Theorem \ref{main} can be modified to make sure that at least one point exists). Let $f:\fX'\to\fX$ be the blow-up of $\fX$ at the closed subscheme given by $x$. Note that $h^{i,j}(\fX')=h^{i,j}(\fX)$ for $i\neq j$.

The special fiber $\fX'_{\bF_p}$ is the blow-up of $\fX_{\bF_p}$ at a point, denote by $E\subset \fX'_{\bF_p}$ the exceptional divisor. Let $H$ be any ample divisor on $\fX'_{\bF_p}$. We will find a linear combination $mH+nE$ with $n\geq 0$ such that the top self-intersection $(mH+nE)^d$ is prime to $p$.

Suppose that $H|_E$ has degree $k$ in $\Pic(E)=\bZ$. Then $H^{d-i}\cdot E^i=k^{d-i}\cdot(-1)^i$ for $i>0$ by Example 8.3.9. in \cite{fulton} and the projection formula, so \begin{equation}\label{blowup degree}(H+nE)^d=H^d+\sum\limits_{i=1}^d\binom{d}{i}n^ik^{d-i}(-1)^{i-1}=H^d-(k-n)^d+k^d\end{equation} For varying $n$ the residue of $(k-n)^d$ modulo $p$ takes at least two different values, so we can find $n$ such that the expression (\ref{blowup degree}) is prime to $p$. For a large enough $r$ the divisor $(pr+1)H+nE$ is ample and has non-zero self-intersection modulo $p$, as desired.
\end{proof}

Apart from satisfying Hodge symmetry, compact K\"ahler manifolds must have nonzero middle Hodge numbers $h^{i,i}(X)\neq 0$ for $i\leq\dim X$ because $H^i(X,\Omega^{i}_{X/\bC})$ contains the nonzero $i$-th power of the class of a K\"ahler form. It would be interesting to find out whether this is the case for rigid-analytic varieties with projective reduction:

\begin{quest} Does there exist a smooth proper rigid-analytic variety $X$ of $\dim X>0$ admitting a formal model with projective special fiber such that $H^1(X,\Omega^1_{X/K})=0$?
\end{quest}

Note that $X$ must have non-zero second de Rham cohomology $H^2_{\dR}(X/K)$. If $X$ is algebraic projective, then a non-zero class is given by the first Chern class of an ample line bundle. If, on the contrary, $X$ is not projective, then no ample line bundle on the special fiber $\fX_k$ can lift to $\fX$. The obstruction to lifting a line bundle gives a non-zero class in second cohomology as follows. Consider the long exact sequence $$\dots\to \Pic(\fX)\to \Pic(\fX_k)\xrightarrow{\delta} H^2(\fX,(1+\fm\cO_{\fX})^{\times})\to\dots$$ obtained from the short exact sequence $1\to (1+\fm\cO_{\fX})^{\times}\to\cO_{\fX}^{\times}\to\cO_{\fX_k}^{\times}\to 1$ of sheaves of abelian groups on the Zariski site of $\fX$. For any ample line bundle $L\in \Pic(\fX_k)$ the image $\delta(L)\in H^2(\fX,(1+\fm\cO_{\fX})^{\times})$ is non-zero and, since no power of $L$ lifts to $\fX$, is a non-torsion element. Applying to $\delta(L)$ the isomorphism given by the logarithm $\log:(1+\fm\cO_{\fX})^{\times}[\ip]\simeq \cO_{\fX}[\ip]$ gives a non-zero class in $H^2(\fX,\cO_{\fX}[\ip])=H^2(X,\cO)$.

\bibliographystyle{alpha}
\bibliography{hsym_v6}

\end{document}